\global\mdfdefinestyle{exampledefault}{%
linecolor=lightgray,linewidth=1pt,%
leftmargin=1cm,rightmargin=1cm,
}
\newtheorem{thm}{Theorem}[section]
\newtheorem{prop}[thm]{Proposition}
\newtheorem{cor}[thm]{Corollary}
\newtheorem{lemma}[thm]{Lemma}
\newtheorem{preremark}[thm]{Remark}
\newenvironment{remark}{\begin{preremark}\rm}{\medskip \end{preremark}}
\numberwithin{equation}{section}
\newcommand{\abs}[1]{\left\vert#1\right\vert}
\newcommand{\R}{\mathbb R}
\newcommand{\eps}{\varepsilon}
\newcommand{\grad} {\nabla}
\newcommand{\lap} {\Delta}
\newcommand{\dd} {\; \mathrm{d}}
\DeclareMathOperator{\supp}{supp}
\title{Upper bounds for parabolic equations and the Landau equation}
\author{Luis Silvestre}
\thanks{LS was partially supported by NSF grant DMS-1254332.}
\begin{document}
\begin{abstract}
We consider a parabolic equation in nondivergence form, defined in the full space $[0,\infty) \times \R^d$, with a power nonlinearity as the right hand side. We obtain an upper bound for the solution in terms of a weighted control in $L^p$. This upper bound is applied to the homogeneous Landau equation with moderately soft potentials. We obtain an estimate in $L^\infty(\R^d)$ for the solution of the Landau equation, for positive time, which depends only on the mass, energy and entropy of the initial data.
\end{abstract}

\maketitle

\section{Introduction}

In this article we analyze upper bounds for parabolic equations in nondivergence form, like
\[ f_t - a_{ij}(t,x) \partial_{ij} f \leq f^{1+\alpha}.\]

Our main purpose is to use these upper bounds to derive a priori estimates for the homogeneous Landau equation. In this particular equation, one can obtain upper and lower bounds for the ellipticity coefficients of $a_{ij}(t,x)$ that degenerate as $|x| \to \infty$. Moreover, we have an a priori estimate for $f$ in $L^\infty((0,+\infty),L_2^1(\R^d))$. With this objective in mind we study general parabolic equations  in the full space $[0,+\infty) \times \R^d$ allowing the ellipticity of the coefficients $a_{ij}(t,x)$ to degenerate at a specific rate as $x$ goes to infinity. We obtain an estimate for $\|f(t,\cdot)\|_{L^\infty}$ for any $t>0$ assuming that we have a priori an estimate on the weighted $L^p_\kappa$ space for appropriate values of $p$ and $\kappa$.

The main result is the following.
\begin{thm} \label{t:local-max-pple-intro}
Let $f : [0,1] \times \R^d \to \R$ be a function satisfying the following inequality in the classical sense
\[ f_t \leq a_{ij}(t,x) \partial_{ij} f +  C \|f(t,\cdot)\|_{L^\infty}^{1+\alpha}.\]
Here, $ \alpha$ is a parameter in $[0,2p/d))$, the coefficients $a_{ij}$ are locally uniformly elliptic and, for some constants $\delta>0$, $\Lambda>0$, $\kappa \in \R$ and $\beta \geq -\kappa/d$ and $p \in [1,\infty)$,
\begin{align*}
\det (a_{ij}) &\geq \delta (1+|x|)^{\beta d - \kappa},  \\
(a_{ij}(t,x)) &\leq \Lambda (1+|x|)^{\min(2\beta,2)} \mathrm{I},  \\
\int_{\R^d} (1+|x|)^\kappa f(t,x)^p \dd x &\leq N  \qquad \text{for all } t \geq 0.
\end{align*}

Then, for all $(t,x) \in [0,T] \times \R^d$, we have
\[ f(t,x) < \begin{cases}
K t^{-\frac d {2p}}, & \text{ if } t \leq T, \\
K T^{-\frac d {2p}}, & \text{ if } t > T.
\end{cases}
\]
Here $K$ and $T$ are constants depending on $\delta$, $\Lambda$, $N$, and the dimension $d$.
\end{thm}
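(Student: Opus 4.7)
My plan is to bound $M(t) := \|f(t,\cdot)\|_{L^\infty}$ via a local $L^p$-to-$L^\infty$ smoothing estimate applied at each point, combined with a continuity bootstrap that absorbs the nonlinear right-hand side. A convenient initial observation is that, since $C M(t)^{1+\alpha}$ is spatially constant, $f$ satisfies a \emph{linear} parabolic inequality at each fixed time with a bounded source. Because the weighted $L^p$ assumption is uniform in $t$, if I can prove $M(t) \le K t^{-d/(2p)}$ on $[0,T]$ with constants depending only on $\delta,\Lambda,N,d$, the case $t>T$ follows by applying the same estimate on the shifted interval $[t-T,t]$, which gives exactly the constant bound $KT^{-d/(2p)}$.

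The pointwise step goes as follows. Fix $(t_0,x_0)$ and let $R:=1+|x_0|$. The correct length scale, dictated by the determinant hypothesis, is
\[
r \;:=\; t_0^{1/2}\,R^{(\beta-\kappa/d)/2},
\]
i.e.\ the natural diffusion length associated to the geometric-mean eigenvalue $(\det a)^{1/d}\sim R^{\beta-\kappa/d}$. On $B_r(x_0)$ the weight $(1+|x|)^\kappa$ is comparable to $R^\kappa$, so the hypothesis localizes to $\|f(t,\cdot)\|_{L^p(B_r(x_0))}\le (N R^{-\kappa})^{1/p}$. A local $L^p$-to-$L^\infty$ smoothing estimate for nondivergence parabolic equations, applied on the cylinder $B_r(x_0)\times[t_0/2,t_0]$, should yield
\[
f(t_0,x_0)\;\le\;C\,r^{-d/p}\,\sup_{t\in[t_0/2,t_0]}\|f(t,\cdot)\|_{L^p(B_r(x_0))}\;+\;(\text{forcing}),
\]
which after substituting $r$ becomes
\[
f(t_0,x_0)\;\le\;C\,N^{1/p}\,t_0^{-d/(2p)}\,R^{-(\beta d+\kappa)/(2p)}\;+\;(\text{forcing}).
\]
The exponent of $R$ is nonpositive \emph{precisely} by the hypothesis $\beta\ge-\kappa/d$, so the $x_0$-dependent factor is $\le 1$. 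Under the tentative bound $M(s)\le K s^{-d/(2p)}$, the forcing over $[t_0/2,t_0]$ contributes roughly $t_0\,M(t_0/2)^{1+\alpha}\sim K^{1+\alpha}\,t_0^{1-(1+\alpha)d/(2p)}$, which is subleading to $K t_0^{-d/(2p)}$ for small $t_0$ thanks to $\alpha<2p/d$. A standard continuity argument (choosing $K$ large, then $T$ small) closes the bootstrap on $[0,T]$.

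\textbf{Main obstacle.} The delicate step is justifying the local smoothing estimate in this degenerate, anisotropic setting. Near a far-away point $x_0$ the matrix $a_{ij}$ can have very different eigenvalues: the top one is bounded by $\Lambda R^{\min(2\beta,2)}$, while the others are constrained only through the determinant. Rescaling by the factor $r$ above therefore does \emph{not} give a uniformly elliptic equation on a unit cylinder in the classical sense. One must use a version of the parabolic $L^p$-to-$L^\infty$ estimate whose constant depends on the ellipticity data only through $\det(a_{ij})$ and an upper bound on the trace---essentially the content of the parabolic ABP inequality, supplemented by a reverse-H\"older / Krylov--Safonov iteration to pass from the natural $L^{d+1}$ exponent down to $L^p$. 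Matching the scaling of such an estimate to the choice $r=t_0^{1/2}R^{(\beta-\kappa/d)/2}$, and verifying that the various $R$-powers combine exactly to $R^{-(\beta d+\kappa)/(2p)}$, is where the structural hypotheses $\det a\ge \delta R^{\beta d-\kappa}$, $a\le \Lambda R^{\min(2\beta,2)}\mathrm{I}$, and $\beta\ge-\kappa/d$ all interact in a tightly balanced way.
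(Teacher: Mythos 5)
Your high-level plan — fix a scale $r$ near each $x_0$ dictated by the coefficients, localize the weighted $L^p$ control, apply a local $L^p$-to-$L^\infty$ smoothing estimate, and close via a bootstrap in $t$ plus a time-translation for $t>T$ — is morally the right shape, and the translation trick for $t>T$ matches the paper's Corollary~\ref{c:local-max-pple} exactly. But the central ingredient you invoke is precisely the thing that has to be proved, and the route you sketch for it does not close.

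The gap is the ``local $L^p$-to-$L^\infty$ smoothing estimate whose constant depends on the ellipticity data only through $\det(a_{ij})$ and an upper bound.'' No such estimate is available for $p<d+1$. The parabolic ABP inequality (Krylov) does give control with a constant depending only on the determinant lower bound, but only from the $L^{d+1}$ norm of the forcing. To descend to $L^p$ for $p\ge 1$ you propose a Krylov--Safonov / reverse-H\"older iteration, but that machinery (Caffarelli--Cabr\'e Thm.~4.8(2), Imbert--Silvestre Prop.~2.4.34) fundamentally depends on the \emph{ratio} $\Lambda/\lambda_{\min}$, not just on $\det a$ and $\Lambda$. In the present anisotropic setting, deducing $\lambda_{\min}\gtrsim \det a/\Lambda^{d-1}$ gives an ellipticity ratio at scale $|x_0|\sim R$ of order $\Lambda^d/\delta\cdot R^{\,d\min(2\beta,2)-\beta d+\kappa}$, which for $\beta\le1$ is $\Lambda^d/\delta\cdot R^{\beta d+\kappa}$. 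Under the hypothesis $\beta\ge-\kappa/d$ this exponent is $\ge 0$, so the ratio grows polynomially in $R$, and the KS constants depend on the ratio in a way that is not polynomial (they come from covering arguments); you cannot absorb this into the $R^{-(\beta d+\kappa)/(2p)}$ factor. Also, your choice $r=t_0^{1/2}R^{(\beta-\kappa/d)/2}$ (geometric-mean eigenvalue) is not the right scale: it need not satisfy $r\lesssim R$, so the weight $(1+|x|)^\kappa$ is not necessarily comparable on $B_r(x_0)$, breaking the localization of the $L^p$ hypothesis.

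The paper's proof sidesteps all of this by not invoking a smoothing estimate at all. It runs a direct ABP/Jacobian argument: one touches $f$ from above by a parametrized family of parabolic barriers $\varphi=h\,m(t)\,U(t,x-y)$ with $r_0=\sqrt{t_0}(1+|x_0|)^{\min(\beta,1)}$ (the \emph{top}-eigenvalue scale, which automatically satisfies $r_0\le 1+|x_0|$), estimates the Jacobian of the map from crossing points to parameters $(h,y)$, and bounds the measure of the contact set $A(t)$ directly by Chebyshev from the weighted $L^p$ bound (since $f\gtrsim m(t)$ there). The determinant hypothesis appears in the Jacobian denominator and the $\Lambda$ hypothesis in the barrier's Hessian, and they never need to be combined into an ellipticity ratio. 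That is how one obtains an $L^p$-to-$L^\infty$ gain for every $p\ge1$ with explicit, polynomial dependence on $\delta$, $\Lambda$, $N$ — which is exactly what the paper emphasizes is not achievable with the covering-based local maximum principles. If you want to salvage your outline, the missing lemma to prove is an $L^p$-to-$L^\infty$ ABP-type estimate where the measure of the relevant contact set is controlled by Chebyshev; once you have that, the rest of your scaling and bootstrap is sound.
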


A more precise description, including the explicit formula for $K$ and $T$, is given in Theorem \ref{t:local-max-pple-nonlinear}. We also provide a version of the main theorem for the case $p=1$ and $\alpha =2/ d$ in Theorem \ref{t:local-max-pple-nonlinear-log}. This bordeline situation requires a $\log$-correction in the right-hand side of the equation.

The restriction $\alpha \in [0,2p/d)$ is necessary for the result in Theorem \ref{t:local-max-pple-intro} to hold. We show in Proposition \ref{p:unimprovability} that the $L^\infty$ estimate would fail otherwise.

It is well known that the nonlinear heat equation
\[ f_t - \lap f = f^{1+\alpha},\]
may blow up in finite time for $\alpha > 0$. In the context of this paper, we study subsolutions of the above equation with the extra hypothesis that $\|f(t,\cdot)\|_{L^p}$ stays bounded at all times and derive an a priori estimate in $L^\infty$.  There are several ways to prove this estimate for the nonlinear heat equation. The interesting feature of the result in this article is that it applies to equations with rough coefficients $a_{ij}$ is nondivergence form, depending only of the ellipticity estimates of $\{a_{ij}\}$. Estimates for equations with rough coefficients are applicable to nonlinear equations, whose coefficients depend on the solution. Our main motivation is in the study of the homogeneous Landau equation.

\subsection{The Landau equation}
The Landau equation is a common model is plasma physics \cite{MR0258399}, \cite{lifshitz1981physical}. It is obtained as a limit of the Boltzmann equation when the angular singularity (often written as a parameter $\nu$) converges to two.

The results in this article apply to the space homogeneous case. The equation concerns a nonnegative function $f(t,v)$ (representing a density of particles) which satisfies the equation
\begin{equation} \label{e:intro-landau}
 f_t = \bar a_{ij}(t,v) \partial_{ij} f + \bar c(t,v) f .
\end{equation}

Here, $\bar a_{ij}(t,v)$ and $\bar c$ stand for
\begin{align}
\bar a_{ij}(t,v) &= a_{d,\gamma} \int_{\R^d} \left( I - \frac{w}{|w|} \otimes \frac{w}{|w|} \right) |w|^{\gamma+2} f(v-w) \dd w, \label{e:intro-aij} \\
\bar c(t,v) &= -\partial_{ij} \bar a_{ij} = c_{d,\gamma} \int_{\R^d} |w|^\gamma f(v-w). \label{e:intro-c}
\end{align}

The parameter $\gamma$ is an arbitrary number within the range $[-d,+\infty)$. In the endpoint case $\gamma=-d$, the last integral formula should be replaced by $\bar c = c_{d,\gamma} f$.

The Landau equation can also be written in divergence form as
\[ f_t = \partial_i \left( \bar a_{ij} f_{ij} - \bar  b_i f\right),\]
where $\bar b_i = \partial_j \bar a_{ij}$. In this paper, we use its nondivergence formulation.

Obtaining a priori estimates for the function $f$ seems to be harder for smaller values of $\gamma$. The case $\gamma \in [0,1]$ (called \emph{Maxwell molecules} when $\gamma = 0$ and \emph{hard potentials} when $\gamma > 0$) is essentially well understood (see \cite{desvillettes2000spatially} and \cite{villani1998spatially}). In that range, the solution becomes immediately $C^\infty$ for $t>0$ provided that the initial data has finite mass and energy. The theory available for the case $\gamma < 0$ (called \emph{soft potentials}) is less complete. For $\gamma \in [-2,0)$ (this case is called \emph{moderately soft potentials}), in \cite{alexandre2013some}, the authors prove that if the initial data $f_0 = f(0,\cdot)$ is in $L^2$, then $f(t,\cdot)$ stays in $L^2$ for all $t>0$. Their $L^2$ estimate grows exponentially as $t \to +\infty$. In \cite{wu2014global}, also regarding the case $\gamma \in [-2,0)$, the author shows that if $f_0 \in L^p$, for $1 < p < +\infty$, then the solution $f(t,\cdot)$ remains in $L^p$. The growth of the $L^p$ estimate is exponential for $\gamma \in (-2,0)$ and double exponential for $\gamma = -2$. The case $\gamma = -2$ also requires an extra moment assumption in the initial data. At the moment, there is no a priori estimate available for the case $\gamma < -2$ (\emph{very soft potentials}) without further assumptions on the function $f$ or the initial data $f_0$ (for example a smallness assumptions). This is also a limitation of our result.

We obtain the following a priori estimate for classical solutions to the Landau equation. It is derived as a consequence of Theorem \ref{t:local-max-pple-intro}. Our result provides an estimate for $\|f(t,\cdot)\|_{L^\infty}$ which does not deteriorate in time and depends only on physically meaningful quantities. In particular, it does not depend on $\|f_0\|_{L^p}$ for any $p>1$.

\begin{thm} \label{t:landau-intro}
Let $\gamma \in [-2,0]$ and $f$ be a solution to the Landau equation \eqref{e:intro-landau} with initial data $f(0,v) = f_0(v)$. Assume that
\begin{align*}
M_1 \leq \int_{\R^d} f_0(v) \dd v &\leq M_0, \\
\int_{\R^d} |v|^2 f_0(v) \dd v &\leq E_0, \\
\int_{\R^d} f_0(v) \log f_0 \dd v &\leq H_0.
\end{align*}
Then, we have the following a priori upper bound on $f$,
\[ f(t,v) \leq \begin{cases}
K t^{-d/2} & \text{ for $t \leq T$},\\
K T^{-d/2} & \text{ for $t \leq T$},
\end{cases}\]
where $K$ and $T$ depend on $d$, $M_1$, $M_0$, $E_0$ and $H_0$ only.
\end{thm}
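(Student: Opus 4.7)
The strategy is to verify the hypotheses of Theorem \ref{t:local-max-pple-intro} (or, in the endpoint case $\gamma = -2$, of its log-corrected version Theorem \ref{t:local-max-pple-nonlinear-log}) for the Landau equation \eqref{e:intro-landau}. Since the homogeneous Landau flow conserves mass and energy and dissipates entropy, the bounds $M_1 \leq \int f(t,v)\, \dd v \leq M_0$, $\int |v|^2 f(t,v)\, \dd v \leq E_0$, and $\int f(t,v) \log f(t,v)\, \dd v \leq H_0$ propagate uniformly to all times $t \geq 0$ and are the only inputs used below.

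The main step is the coercivity of $\bar a_{ij}$. Following the Desvillettes--Villani-type argument, the mass lower bound together with the entropy bound prevents $f(t,\cdot)$ from concentrating onto a hyperplane, which in turn implies that the smallest eigenvalue of $\bar a_{ij}(t,v)$ (realized in the direction of $v$) is bounded below by $c(1+|v|)^{\gamma}$, while the remaining $d-1$ eigenvalues, in directions perpendicular to $v$, are comparable to $(1+|v|)^{\gamma+2}$. Combining these gives
\[ \det \bar a_{ij}(t,v) \geq \delta (1+|v|)^{d\gamma + 2(d-1)}, \qquad \bar a_{ij}(t,v) \leq \Lambda (1+|v|)^{\gamma+2} \mathrm{I}, \]
with constants depending only on $d$, $M_1$, $M_0$, $E_0$, $H_0$.

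These estimates match the framework of Theorem \ref{t:local-max-pple-intro} with the choice $\beta = (\gamma+2)/2 \in [0,1]$, $\kappa = 2$, and $p = 1$: one has $\min(2\beta,2) = \gamma+2$, the inequality $\beta d - \kappa \leq d\gamma + 2(d-1)$ is equivalent to $\gamma \geq -2$, the condition $\beta \geq -\kappa/d$ is automatic, and the weighted integrability $\int (1+|v|)^{2} f(t,v)\, \dd v \leq M_0 + E_0$ follows from mass and energy. To put the Landau equation $f_t = \bar a_{ij}\partial_{ij} f + \bar c f$ into the form $f_t \leq \bar a_{ij}\partial_{ij} f + C\|f\|_\infty^{1+\alpha}$, I estimate $\bar c$ by splitting the convolution at a scale $r$,
\[ \bar c(t,v) \leq C\|f\|_\infty \int_{|w|\leq r}|w|^\gamma \dd w + r^\gamma M_0 \leq C\bigl(\|f\|_\infty\, r^{d+\gamma} + M_0\, r^\gamma\bigr), \]
and optimizing in $r \sim (M_0/\|f\|_\infty)^{1/d}$ yields $\bar c(t,v) f(t,v) \leq C \|f\|_\infty^{\,1 + |\gamma|/d}$, that is, $\alpha = |\gamma|/d$.

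For $\gamma \in (-2,0]$ we have $\alpha < 2/d = 2p/d$ strictly, so Theorem \ref{t:local-max-pple-intro} applies directly and yields the claimed bound $f(t,v) \leq K t^{-d/2}$ for $t \leq T$ and the uniform bound $K T^{-d/2}$ for $t > T$. The critical endpoint $\gamma = -2$ saturates the admissible range with $\alpha = 2/d$; here the same convolution estimate produces only a mild logarithmic correction, and Theorem \ref{t:local-max-pple-nonlinear-log} gives the same conclusion. The principal obstacle is the first step: the coercivity lower bound for $\det \bar a_{ij}$ is where the entropy hypothesis is truly essential, since without it the matrix $\bar a_{ij}$ could degenerate in the direction of $v$ faster than $(1+|v|)^\gamma$ and the determinant condition would fail; once this coercivity is in place the remainder of the argument is just matching exponents and applying the general theorem.
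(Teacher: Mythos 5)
Your proposal follows the same route as the paper: propagate mass/energy/entropy bounds, use the Desvillettes--Villani coercivity argument to control $\det\bar a_{ij}$ and $|\bar a_{ij}|$, use the splitting-at-scale-$r$ bound for $\bar c$ to obtain $\alpha = |\gamma|/d$, and apply Theorem~\ref{t:local-max-pple-intro} (or its log-corrected version). The exponent matching with $\kappa = 2$, $p = 1$ and $\beta$ chosen so that $\min(2\beta,2) = \gamma+2$ is correct; the paper chooses $\beta = 2+\gamma$ instead of your $\beta = (\gamma+2)/2$, but both choices satisfy the hypotheses of the general theorem, so that is an immaterial difference.

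There is, however, a genuine gap in your treatment of the endpoint $\gamma = -2$. You assert that ``the same convolution estimate produces only a mild logarithmic correction,'' but it does not: the two-region optimization you set up gives precisely $\bar c \lesssim M_0^{1+\gamma/d}\|f\|_\infty^{-\gamma/d}$, which at $\gamma=-2$ is $\bar c \lesssim \|f\|_\infty^{2/d}$ with no logarithmic gain whatsoever, and this lands exactly on the forbidden borderline $\alpha = 2p/d$ of Theorem~\ref{t:local-max-pple-nonlinear}. To invoke Theorem~\ref{t:local-max-pple-nonlinear-log} you must supply the extra factor $\log(1+\|f\|_\infty)^{-\eps}$, and this requires a genuinely different ingredient: the entropy (i.e.\ $L\log L$) bound on $f$. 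Concretely, one splits the convolution into \emph{three} annuli $B_r$, $B_R\setminus B_r$, $\R^d\setminus B_R$ with $r < R$ chosen with logarithmic factors, and on the middle annulus uses a Chebyshev bound in terms of $\int f\log(1+f)$ at a level $\lambda$ to beat the $L^\infty$ bound. This is Lemma~\ref{l:c-coeff-log} in the paper, and it yields $\bar c \lesssim \|f\|_\infty^{2/d}\log(1+\|f\|_\infty)^{-(d-2)/4}$, which is only a correction when $d \geq 3$ --- a dimensional restriction you also omit. Without this lemma the endpoint case is not proved.
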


Note that our estimate does not depend on an upper bound of the initial data $f_0$. We only make the physically meaningful assumptions of boundedness of mass, energy and entropy for the initial data. In this sense, our result is a regularization because we obtain an $L^\infty$ estimate of $f(t,\cdot)$ which does not depend on the $L^\infty$ norm of $f_0$. This is a contrast with the previous results in \cite{alexandre2013some} and \cite{wu2014global} which show a propagation of estimates. A result that is perhaps closer to ours is given in \cite{guillenPersonalcommunication}, but based on completely different methods. There, the authors find a local $L^\infty$ estimate using the DeGiorgi iteration.

Theorem \ref{t:landau-intro} is the local version of Theorem 1.2 in \cite{silvestre2014new} concerning the Boltzmann equation. Note that the result in this article is limited to the space homogeneous case, whereas  the result in \cite{silvestre2014new} holds in the space in-homogeneous case as well. The reason for this difference is that we apply techniques from integro-differential equations to the Boltzmann equation and it is easier in that context to relate the $L^\infty$ norm of the solution with its $L^1$ norm (see Lemma 7.2 in \cite{silvestre2014new} and the paragraph right before it).

Regarding the case $\gamma < -2$ (\emph{very soft potentials}) we only obtain a conditional estimate depending on the norm of $f$ in  $L^\infty_t L^p(\R^d)$ for some $p > d/(d+2+\gamma)$.

After the $L^\infty$ bound in 
Theorem \ref{t:local-max-pple-intro}, it is easy to obtain higher regularity estimates. Indeed, applying the Krylo-Safonov theorem \cite{krylov1980certain} (or alternatively, the theorem of DeGiorgi, Nash and Moser), we immediately obtain a H\"older continuity estimate in both $v$ and $t$. Therefore, the classical Schauder estimates imply H\"older continuity estimates for $D^2_v f$ and $f_t$. These estimates are local. The same reasoning shows that bounded weak solutions are locally $C^2$ in space and $C^1$ in time. Some further analysis is needed in order to establish by what rate these estimates deteriorate (or improve?) as $|v| \to +\infty$.

Theorem \ref{t:landau-intro} immediately implies the existence of classical solutions when the initial data $f_0$ is sufficiently nice. Indeed, assuming that $f_0 \in L^\infty$, in addition to the hypothesis of Theorem \ref{t:landau-intro}, a bounded solution exists locally in time \cite{peskov}. This solution must be classical due to the reasoning in the previous paragraph. Theorem \ref{t:landau-intro} tells us that this bounded solution does not blow up in finite time and is, therefore, a global smooth solution.  In this sense, the control of the $L^\infty$ norm of $f$ is the key to ensure that a global smooth solution exists. This a priori estimate in $L^\infty$ is the focus of this article.

Our result in Theorem \ref{t:landau-intro} is an a priori estimate for classical solutions. We require $f$ to be $C^2$ in space, $C^1$ in time. We do not prove that our $L^\infty$ estimate holds for all weak solutions. This might be relevant when $f_0$ is not smooth enough to guarantee the existence of smooth solutions, or for the conditional $L^\infty$ estimate of Theorem \ref{t:landau-conditional}. The method that we used (based on quantitative maximum principles in the style of the Aleksandrov-Bakelman-Pucci estimate) is not well suited to the usual definition of weak solution in terms of integration by parts against test functions. It might be possible to use an approximation scheme to obtain a solution, for any given initial data, which satisfies our bounds. We did not pursue that issue and we focus  this paper on the a priori estimates only.

\subsection{Notation}

Notation: we use the symbols $\lesssim$ and $\gtrsim$ to denote boundedness up to a constant depending at most on dimension. Likewise, we write $a \approx b$ when $a \gtrsim b$ and $a \lesssim b$.

When we work with general parabolic equations, we use $(t,x)$ as the names for the variables. This is in agreement with the standard literature. When we deal with the Landau equation, we use the names $(t,v)$ instead. In this context, the variable $x$ should be reserved for the space variable in the space inhomogeneous problem. The variables $t$ and $v$ represent time and velocity.

\section{Upper bounds for generic parabolic equations.}
In this section we prove Theorem \ref{t:local-max-pple-intro} together with a version for the borderline case $\alpha = 2p/d$.

The following result is a more precise version of Theorem \ref{t:local-max-pple-intro}. Its proof is, without a doubt, the most difficult part of this paper. The result is related to the local maximum principles for elliptic and parabolic equations in nondivergence form, like the second part of Theorem 4.8 in \cite{caffarelli1995fully} or Proposition 2.4.34 in \cite{imbert2013introduction}. Note that the proofs in those references use some covering argument and the resulting constants are very difficult to compute explicitly. Our proof here is much more direct and thanks to that we are able to obtain an explicit dependence of the upper bound respect to all parameters. Moreover, we can also keep track of the precise required asymptotic bounds on the coefficients at infinity, which is crucial for the application to the Landau equation.

The idea of the proof is somewhat inspired by the Aleksandrov-Bakelman-Pucci estimate or, more specifically, by its parabolic counterpart which was obtained by Krylov \cite{krylov1976sequences}. We do not apply Krylov's estimate here, but we use some of the ideas in its proof. We use a modern approach which consist in estimating the measure of points where $f$ can be touched from above with certain quadratic polynomials (see \cite{savin2007small} and \cite{wu2014global} for examples of the use of this approach to obtain estimates for equations in nondivergence form).

\begin{thm} \label{t:local-max-pple-nonlinear}
Let $f : [0,1] \times \R^d \to \R$ be a bounded continuous function. Let $p \in (1,\infty)$. Assume that wherever $f(t,x)>0$, the function is second differentiable in $x$, differentiable in $t$, and
\[ f_t \leq a_{ij}(t,x) \partial_{ij} f +  C \|f(t,\cdot)\|_{L^\infty}^{1+\alpha}.\]
Here, $ \alpha$ is a parameter in $[0,2p/d)$, the coefficients $a_{ij}$ are locally uniformly elliptic and, for some constants $\delta>0$, $\Lambda>0$, $\kappa \in \R$ and $\beta \geq -\kappa/d$,
\begin{align}
\det (a_{ij}) &\geq \delta (1+|x|)^{\beta d - \kappa}, \label{e:lmpn-delta} \\
(a_{ij}(t,x)) &\leq \Lambda (1+|x|)^{\min(2\beta,2)} \mathrm{I},  \label{e:lmpn-Lambda}  \\
\int_{\R^d} (1+|x|)^\kappa f(t,x)^p \dd x &\leq N. \label{e:lmpn-N}
\end{align}

Let $K$ be the number so that
\begin{equation} \label{e:lmpn-K}
 K^p = C_1 \frac{(1+\Lambda)^{(d+1)} N }{ \delta }.
\end{equation}
Then, for all $(t,x) \in [0,T] \times \R^d$, we have
\begin{equation} \label{e:lmpn-estimate}
 f(t,x) < K t^{-\frac d {2p}},
\end{equation}
where
\begin{equation} \label{e:lmpn-T}
 T = \left( \frac{1+ \Lambda}{C K^\alpha} \right)^{1/\left(1-\frac {\alpha d} {2p}\right)}.
\end{equation}
The constant $C_1$ depends on dimension only.
\end{thm}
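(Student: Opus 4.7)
The plan is to argue by contradiction via a parabolic Aleksandrov--Bakelman--Pucci (ABP) estimate implemented through sliding paraboloids, carefully calibrated so that the weighted $L^p$ bound \eqref{e:lmpn-N} and the degenerate ellipticity bounds \eqref{e:lmpn-delta}--\eqref{e:lmpn-Lambda} interlock into the clean inequality \eqref{e:lmpn-K}. Suppose \eqref{e:lmpn-estimate} fails; pick the first time $t_0 \in (0,T]$ at which $f$ reaches the forbidden height $h := K t_0^{-d/(2p)}$, at some point $x_0$. The intrinsic scales are forced on us by the ellipticity at $x_0$: a spatial radius $r \sim \sqrt{\Lambda t_0}\,(1+|x_0|)^{\beta}$ (the diffusion length over time $t_0$ for coefficients of size $\Lambda (1+|x_0|)^{2\beta}$) and a paraboloid opening $M$ with $M r^{2} \sim h/2$.

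For each $y \in B_{r/2}(x_0)$, slide the concave paraboloid $P_y(z) := c_y - M|z-y|^{2}$ down (decrease $c_y$) until the first time $s_y \in [0,t_0]$ and point $z_y \in B_r(x_0)$ at which $P_y(z_y) = f(s_y,z_y)$, keeping $P_y \ge f(s,\cdot)$ on the base cylinder for $s \le s_y$. For $y = x_0$ a contact exists with $c_{x_0} \ge h$, and a continuity argument extends this to every $y$ in the half-ball. At each contact point one has the three pointwise facts $-D^{2}f \ge 2MI$, $f_t \ge 0$ (the paraboloid is $t$-independent and $s_y$ is the first contact time), and the PDE. Combining them with the matrix arithmetic--geometric mean inequality $-a_{ij}\partial_{ij}f \ge 2dM\,(\det a_{ij})^{1/d}$ and \eqref{e:lmpn-delta} gives, at every contact point,
\[ 2dM\,\delta^{1/d}\,(1+|z_y|)^{\beta - \kappa/d} \;\le\; C\,\|f(s_y,\cdot)\|_{L^\infty}^{1+\alpha} \;\le\; C h^{1+\alpha}. \]
The role of $T$ in \eqref{e:lmpn-T} is to ensure that on $[0,T]$ the nonlinear forcing $Ch^{1+\alpha}$ is dominated by the linear dissipation scale $\Lambda h/t_0$ obtained from the diffusive scaling, so the equation reduces effectively to its linear part on the intrinsic cylinder.

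The remainder is a measure count. From $y = z_y + \nabla f(s_y,z_y)/(2M)$ and the Hessian lower bound, the map $y \mapsto z_y$ has Jacobian bounded below by a dimensional constant (this is precisely where the opening $M$ balances the spectral bound on $-D^{2}f$), so the image $E := \{z_y : y \in B_{r/2}(x_0)\}$ satisfies $|E| \gtrsim r^{d}$. On $E$ the paraboloid estimate gives $f(s_y,z_y) = c_y - M|z_y - y|^{2} \gtrsim h$ (since $c_y \ge h$ and $M|z_y-y|^{2} \lesssim Mr^{2} \sim h/2$), and $(1+|z_y|)^{\kappa} \approx (1+|x_0|)^{\kappa}$ because $r \ll 1+|x_0|$. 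Inserting into \eqref{e:lmpn-N} (bounding a union of time slices via the uniform estimate),
\[ N \;\gtrsim\; (1+|x_0|)^{\kappa}\, h^{p}\, r^{d} \;\sim\; \delta\,(1+\Lambda)^{-(d+1)}\,(1+|x_0|)^{\kappa + \beta d}\, h^{p}\, t_0^{d/2}, \]
where the powers of $\delta$ and $\Lambda$ emerge from carefully tracking the constants through the AM--GM step and the choices of $r$ and $M$. Since $h^{p} t_0^{d/2} = K^{p}$ and $\beta \ge -\kappa/d$ forces $(1+|x_0|)^{\kappa + \beta d} \ge 1$, this contradicts \eqref{e:lmpn-K} once the absolute constant $C_1$ is chosen large enough.

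The main obstacle will be the bookkeeping: the four scales $h$, $r$, $M$, and the spatial weight $(1+|x|)^{\kappa}$ must line up simultaneously across the AM--GM step, the Jacobian/change-of-variables estimate, and the definitions of $K$ and $T$. A subtler point is to verify that during the sliding, contact points really do stay inside the ball $B_r(x_0)$ where the ellipticity is comparable to its value at $x_0$; this localization is essential both for the lower bound $\delta(1+|x_0|)^{\beta d-\kappa}$ on $\det a_{ij}$ and for the weight estimate $(1+|z_y|)^\kappa \approx (1+|x_0|)^\kappa$ to apply uniformly. The upper bound \eqref{e:lmpn-Lambda} with exponent $\min(2\beta, 2)$ is what guarantees that the intrinsic radius $r$ is compatible with this localization.
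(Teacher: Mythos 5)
Your overall strategy---contradiction at the first bad time, a family of touching paraboloids indexed by a ball of vertices, the matrix AM--GM inequality to turn the PDE into a determinant bound, and a change-of-variables/measure count against the weighted $L^p$ hypothesis---is genuinely the same mechanism as the paper's proof. The constant accounting you sketch at the end is also on the right track. But there is one structural choice that makes the argument break: your test functions are \emph{time-independent}, and that is exactly what the theorem cannot afford.

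\textbf{The main gap.} You slide a static concave paraboloid $P_y(z)=c_y-M|z-y|^2$ down over the cylinder $[0,t_0]\times B_r(x_0)$ and assert that at the first contact $(s_y,z_y)$ one has $f_t(s_y,z_y)\ge 0$ ``because $s_y$ is the first contact time.'' For this you need the contact to occur at an interior time $s_y>0$. But the only information available for small $s$ is $f(s,x)< m(s)=K s^{-d/(2p)}$, which is \emph{much larger} than your paraboloid's cap height $c_y\approx h=K t_0^{-d/(2p)}$ once $s\ll t_0$, and $f(0,\cdot)$ is completely uncontrolled pointwise. So the sliding can easily terminate at $s_y=0$, where the one-sided time-derivative inequality fails and the whole mechanism stalls. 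This is not a technicality: the theorem's conclusion degenerates as $t\to 0$ precisely because the initial data is only controlled in a weighted $L^p$ sense, and a time-independent barrier cannot see that. The paper's fix is to replace your static $P_y$ by a time-dependent test function
\[
\varphi(t,x)=h\,m(t)\,\Bigl(1+\tfrac{t_0|x-y|^2}{10\,t\,r_0^2}\Bigr),\qquad m(t)=K t^{-d/(2p)},
\]
which blows up as $t\to 0$ (both through $m(t)$ and through the $1/t$ in the paraboloid opening). This forces the first crossing with $f$ to occur at a strictly positive time, at which all three pointwise inequalities you want do hold, and it also automatically confines the crossing to $|x-x_0|\lesssim r_0$ and $f\gtrsim m(t)$ without a separate localization argument.

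\textbf{Two secondary gaps that stem from the same choice.} First, you bound the nonlinear source by $C h^{1+\alpha}$, but $\|f(s,\cdot)\|_{L^\infty}$ is only bounded by $m(s)$, which is unbounded as $s\to 0$; the restriction $\alpha<2p/d$ is precisely what makes the resulting integral $\int_0^{t_0} m(t)^{(d+1)\alpha}t^{3d/2}\,dt$ converge, and this convergence is invisible in your sketch. Second, your parameter family is $d$-dimensional (you let the vertex $y$ range over a ball, with $c_y$ slaved to the sliding), so the Jacobian argument produces a $d$-dimensional \emph{spatial projection} $E$ of contact points lying at \emph{different} times $s_y$. You cannot plug a set like this directly into the per-time-slice bound \eqref{e:lmpn-N}. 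The paper instead uses a $(d+1)$-dimensional family $(h,y)\in(4/5,9/10)\times B_{r_0}(x_0)$, makes the crossing-point map genuinely $(d+1)\to(d+1)$, and then disintegrates the contact set into time slices $A(t)$ before applying Chebyshev and integrating in $t$. Keeping $h$ as a free parameter is what provides the extra dimension that makes the time integral work out. Once you switch to the time-dependent barrier and the two-parameter family, the rest of your outline (the AM--GM step, the ellipticity scaling with $r_0=\sqrt{t_0}(1+|x_0|)^{\min(\beta,1)}$ rather than $(1+|x_0|)^\beta$, and the final contradiction with the formulas \eqref{e:lmpn-K}--\eqref{e:lmpn-T}) can be carried through essentially as you describe.
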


\begin{proof}
Let us define $m(t) = K t^{-\frac d {2p} }$.

Assume that the inequality \eqref{e:lmpn-estimate} is not true for all $t \in (0,1)$. The inequality would hold for small values of $t$ because $f$ is bounded. There would be some $t_0 \in (0,1)$  so that
\[ \sup_{x \in \R^d} f(t_0,x) = K t_0^{-\frac d {2p}} \qquad \sup_{x \in \R^d} f(t,x) < K t^{-\frac d {2p}} \ \text{ for all } t<t_0.\]

In particular, there is some point $x_0 \in \R^d$ so that
\begin{align*}
f(t_0,x_0) &> \frac {99} {100} m(t_0) = \frac {99}{100} K t_0^{-\frac d {2p}}, \\
f(t,x) &\leq m(t) \qquad \qquad \qquad \qquad \text{ for all } t<t_0, x \in \R^d.
\end{align*}
Let $r_0 = \sqrt{t_0} (1+|x_0|)^{\min(\beta,1)}$. For any choice of the parameters $h \in [4/5,9/10]$ and $y \in B_{r_0}(x_0)$, we construct the auxiliary functions
\begin{align*}
U(t,x) &= 1 + \frac{t_0 |x|^2} {10 t \, r_0^2} = 1 + \frac{2 |x|^2} {5 t (1+|x_0|)^{2\beta}}, \\
\varphi(t,x) &= h \, m(t) \, U(t,x-y).
\end{align*}

Since $f(t_0,x_0) > 99/100 \, m(t_0)$, $h<9/10$, and $|y-x_0| \leq r_0$, we have $f(t_0,x_0) > \varphi(t_0,x_0)$. Moreover, since $\varphi$ goes to $+\infty$ as $t \to 0$, then there must be a first point $(t,x)$ where $f$ and $\varphi$ cross. That means
\begin{align}
f(t,x) &= \varphi(t,x), \label{e:lmpn-value} \\
f(s,z) &\leq \varphi(s,z) \text{ for all } s\leq t, z \in \R^d.
\end{align}

Such crossing point $(t,x)$ exists for all choices of $y \in B_{r_0} (x_0)$ and $h \in (4/5,9/10)$.

In particular, we have the elementary relations
\begin{align}
\grad f (t,x) &= \grad \varphi(t,x), \label{e:lmpn-grad}\\
f_t(t,x) &\geq \varphi_t(t,x), \label{e:lmpn-ft}\\
D^2_x f(t,x) &\leq D^2_x \varphi (t,x). \label{e:lmpn-D2}
\end{align}

Since $f(t,x) \leq m(t)$ for all $t\leq t_0$, we always have $h > 4/5$, and $\varphi(t,x) \geq 4m(t)/5$ for any value of $(t,x)$, then we deduce
\begin{align*}
h \left( 1 + \frac{t_0 |x-y|^2}{10 t\, r_0^2} \right) &< 1, \\
\frac{t_0 |x-y|^2}{10 t\, r_0^2} &< 1/4, \\
f(t,x) &\geq 4m(t)/5, \\
|x-x_0| &\leq |x-y| + |y-x_0| < 6 r_0, \\
r_0 &\leq  (1+|x_0|) &&\text{from the construction of $r_0$}.
\end{align*}

Let $A \subset (0,t_0) \times \R^d$ be the set of point $(t,x)$ which are crossing points for some value of the parameters $h \in (4/5,9/10)$ and $y \in B_{r_0}(x_0)$. It is possible that for one value of $(h,y)$ we may have two different crossing points $(t,x)$. However, every crossing point $(t,x) \in A$ corresponds to only one choice of $(h,A)$. Indeed, they can be recovered using the relations \eqref{e:lmpn-value} and \eqref{e:lmpn-grad}. Since $f$ is smooth at all crossing points in $A$, it is not hard to see that the map $(t,x) \mapsto (h,y)$ is at least Lipschitz. In fact, we will compute some of its derivatives below. Understanding this map, from the crossing point $(t,x)$ to the value of the parameters $(h,y)$ is the key of this proof.

From now on we consider $h$ and $y$ as functions of $(t,x) \in A$. We write $h=h(t,x)$ and $y = y(t,x)$. We write $\partial(h,y) / \partial(t,x)$ to denote the $R^{(d+1) \times (d+1)}$ matrix valued function corresponding to the derivative of the map $(t,x) \mapsto (h,y)$.

 Since every value of $h \in (4/5,9/10)$ and $y \in B_{r_0}(x_0)$ corresponds to some point $(t,x) \in A$, then the image of this map is the whole cylinder $(4/5,9/10) \times B_{r_0}(x_0)$. Using the elementary Jacobian formula, we get
\begin{equation} \label{e:lmpn-Jacobian}
\frac{ |B_{r_0}| } {10} \approx r_0^d \lesssim \int_A \abs{ \det \frac {\partial (h,y)} {\partial (t,x)} } \dd x \dd t.
\end{equation}

In order to estimate the Jacobian inside the integral, we differentiate the indentities \eqref{e:lmpn-value} and \eqref{e:lmpn-grad}. We obtain
\begin{align*}
f_t &= \varphi_t + m(t) \left( \frac{\partial h}{\partial t} U(x-y) - h \, \grad U \cdot \frac{\partial y}{\partial t} \right), \\
\grad f&= \grad \varphi + m(t) \left( U(x-y) \frac{\partial h}{\partial x} - h \, \grad U \frac{\partial y}{\partial x} \right),\\
D^2 f &= D^2 \varphi \left( \mathrm I - \frac {\partial y}{\partial x} \right) + m(t) \grad U(x-y) \otimes \frac{\partial h}{\partial x}.
\end{align*}
Recalling that $\grad f(t,x)= \grad \varphi(t,x)$, we rewrite the identities above in matrix form.
\[ \begin{pmatrix}
m(t) U(x-y) & -m(t) \, h\, \grad U(x-y) \\
-m(t) \grad U(x-y)^T & m(t) \, h \, D^2 U(x-y)
\end{pmatrix} \cdot 
 \begin{pmatrix}
\frac{\partial h}{\partial t} & \frac{\partial h}{\partial x} \\
\frac{\partial y}{\partial t}  & \frac{\partial y}{\partial x} 
\end{pmatrix} 
=  \begin{pmatrix}
f_t - \varphi_t & 0 \\
? & D^2 \varphi - D^2 f
\end{pmatrix} \]

Here $\grad U \in \R^{1 \times d}$ and $D^2 U \in \R^{d \times d}$. The matrices involved in the identity above are in $\R^{(d+1) \times (d+1)}$. The question mark stands for a value that we did not compute because it is irrelevant for the estimate in this proof (it does not affect the determinant of the right-hand side).

The second factor on the left-hand side is exactly $\partial(t,x)/\partial (h,y)$. We will estimate the determinant of the right hand side using the equation. The first factor depends on our special construction. Taking determinants, we get
\[ \abs{ \det \frac {\partial (h,y)} {\partial (t,x)} } = \frac{ (u_t - \varphi_t) \det(D^2 \varphi - D^2 u) } { m(t)^{d+1} h^d \det \begin{pmatrix} U & -\grad U \\ -\grad U & D^2 U \end{pmatrix} }.\]

Given our choice of the function $U$, we have
\begin{equation} \label{e:lmpn-funnymatrix}
\det  \begin{pmatrix} U & -\grad U \\ 
-\grad U & D^2 U 
\end{pmatrix}  = \left( \frac {t_0} {5 t\, r_0^2} \right)^d \left( 1 - \frac {t_0 |x-y|^2} {10 t r_0^2} \right) \gtrsim t^{-d} r_0^{-2d} t_0^d. 
\end{equation}
The last inequality holds because, as we mentioned before, $t_0 |x-y|^2/(10 t r_0^2) < 1/4$. Note that we also have $|D^2 \varphi| \lesssim t_0 m(t)/(t r_0^2)$.

Since $h \in (4/5,9/10)$, the factor $h^d$ is also bounded below and above depending on $d$ only. We can simplify our estimate of the Jacobian to
\[ \abs{ \det \frac {\partial (h,y)} {\partial (t,x)} } \lesssim \frac{ (u_t - \varphi_t) \det(D^2 \varphi - D^2 u) r_0^{2d} t^d } { m(t)^{d+1} t_0^d }.\]

Let us analyze $\varphi_t$. We have
\begin{align*} 
0 \geq \varphi_t(t,x) &= m'(t) \, h \, U(x-y) - m(t) h \frac{U(t,y-x)} t, \\ 
 &\gtrsim  -\frac {m(t)} t && \text{(recall that we always have $U(t,y-x) < 1/4$)}.
\end{align*}

Recall from \eqref{e:lmpn-ft} and \eqref{e:lmpn-D2} that $u_t - \varphi_t \geq 0$ and $D^2 \varphi - D^2 u \geq 0$. We use the equation in order to estimate the numerator.
\begin{align*} 
 (f_t - \varphi_t) + a_{ij} \left(\partial_{ij} \varphi - \partial_{ij} f \right) &\leq C m(t)^\alpha f - \varphi_t + |a_{ij}| |D^2 \varphi|, \\
&\lesssim m(t) \left( C m(t)^\alpha +  \frac 1 t \left( 1 + t_0 \frac {|a_{ij}|} {r_0^2} \right) \right), \\
&\lesssim m(t) \left( C m(t)^\alpha +  \frac 1 t \left( 1 + \Lambda \right) \right). 
\end{align*}
In the last inequality, we used that \eqref{e:lmpn-Lambda} and the choice of $r_0$ imply that $t_0 |a_{ij}| / r_0^2 \leq \Lambda$.

There is an elementary inequality from linear algebra that says that for any two positive matrices $(a_{ij})$ and $(b_{ij})$ in $\R^{d \times d}$,
\[ a_{ij} b_{ij} \geq d \det (a_{ij})^{1/d} \det (b_{ij})^{1/d}. \]
Applying this inequality to $(a_{ij})$ and $(D^2 \varphi - D^2 f)$, we get
\[ \det(D^2 \varphi - D^2 f) \leq \frac{m(t)^d \left( C m(t)^\alpha +  \frac 1 t(1+\Lambda) \right)^d} {\det(a_{ij})}.\]

Therefore, we have
\begin{align*} 
 \abs{ \det \frac {\partial (h,y)} {\partial (t,x)} } &\lesssim \frac{ m(t)^{d+1} \left( C m(t)^\alpha +  \frac 1 t(1+\Lambda) \right)^{d+1}  r_0^{2d} t^d } { \det(a_{ij})  m(t)^{d+1} t_0^{d} }, \\
&\lesssim \frac  {r_0^{d} (1+|x_0|)^\kappa } { \delta t_0^{d/2}} \left( C^{(d+1)} m(t)^{(d+1)\alpha} t^d + t^{-1} (1+\Lambda)^{d+1}   \right), \\
\end{align*}
For the last inequality we used that \eqref{e:lmpn-delta} and our definition of $r_0$ imply that $r_0^d / \det(a_{ij}) \leq t_0^{d/2} (1+|x_0|)^\kappa$.

Recalling \eqref{e:lmpn-Jacobian},
\begin{equation} \label{e:lmpn-ineq-to-contradict}
1 \lesssim \int_A  \frac {(1+|x_0|)^\kappa} {\delta t_0^{d/2}} \left( C^{(d+1)} m(t)^{(d+1)\alpha} t^d + t^{-1} (1+\Lambda)^{d+1} \right) \dd x \dd t.
\end{equation}

Recall that  $4m(t)/5 \leq f(t,x) \leq m(t)$ for all $(t,x) \in A$. Moreover, $r_0 \leq (1+|x_0|)$. Because of our assumption \eqref{e:lmpn-N}, for any $(t,x) \in A$, we have
\begin{equation} \label{e:lmpn-using-kappa}
\int_{B_{r_0}(x_0)} f(t,z)^p \dd z \lesssim N (1+|x_0|)^{-\kappa}.
\end{equation}

Let $A(t) = \{ z : (t,z) \in A\}$. Since $f \gtrsim m(t)$ on $A(t)$, $|A(t)| \leq N (1+|x_0|)^{-\kappa} / m(t)^p$ because of Chebyshev's inequality. Therefore, we deduce from \eqref{e:lmpn-ineq-to-contradict} that
\begin{align*} 
1 &\lesssim \int_0^{t_0} \frac {N} {m(t)^p \, \delta  t_0^{-d/2}}  \left( C^{(d+1)} m(t)^{(d+1)\alpha} t^d + t^{-1} (1+\Lambda)^{d+1} \right) \dd t, \\ &\lesssim \frac {N} {K^p \delta t_0^{d/2}} \int_0^{t_0} \left( C^{(d+1)} m(t)^{(d+1) \alpha} t^{3d/2} + ( 1 + \Lambda )^{d+1} t^{d/2-1} \right) \dd t
\end{align*}

Integrating, we get the inequality,
\begin{equation} \label{e:lmpn-specifictocontradict}
 K^p \lesssim \frac N{ \delta } \left( C^{(d+1)} K^{\alpha(d+1)} t_0^{(d+1)\left(1-\frac{\alpha d} {2 p}\right)  } + ( 1 + \Lambda )^{d+1}  \right).
\end{equation}

We arrive to a contradiction when $K$ is sufficiently large so that $K^p \gtrsim \frac N \delta (1+\Lambda)^{d+1}$, and $T$ is sufficiently small (depending on $K$) so that the second term in the right-hand side of the inequality is larger than the first.

Recall that $\alpha < 2p/d$. This is a contradiction with our choice of $K$ and $T$ in \eqref{e:lmpn-K} and \eqref{e:lmpn-T}. Indeed, the first term in the right-hand side is
\begin{align*}
\frac N { \delta}  C^{(d+1)} K^{\alpha(d+1)} t_0^{(d+1)\left(1-\frac{\alpha d} {2 p}\right)  } &< \frac N \delta  C^{(d+1)} K^{\alpha(d+1)} T^{(d+1)\left(1-\frac{\alpha d} {2 p}\right)  },\\
& \leq \frac N { \delta} ( 1 + \Lambda )^{d+1}.
\end{align*}

\end{proof}


As we will see later, the range of $\alpha$ in Theorem \ref{t:local-max-pple-nonlinear} is unimprovable with the given hypothesis. 

The following Theorem is a version of the bordeline case $\alpha = 2/d$ and $p=1$ with an $\log$-improved hypothesis on the nonlinearity.
 
\begin{thm} \label{t:local-max-pple-nonlinear-log}
Let $f : [0,1] \times \R^d \to \R$ be a bounded continuous function. Assume that wherever $f(t,x)>0$, the function is second differentiable in $x$, differentiable in $t$, and
\[ f_t \leq a_{ij}(t,x) \partial_{ij} f +  C \frac{\|f(t,\cdot)\|_{L^\infty}^{2/d} }{\log \left(1+ \|f(t,\cdot)\|_{L^\infty}\right)^\eps} \, f.\]
Here, $\eps>0$, the coefficients $a_{ij}$ are locally uniformly elliptic and, for some constants $\delta>0$, $\Lambda>0$, $\beta \in \R$, and $\kappa > -\beta d$,
\begin{align}
\det (a_{ij}) &\geq \delta (1+|x|)^{\beta d - \kappa}, \label{e:lmpn-delta-log} \\
(a_{ij}(t,x)) &\leq \Lambda (1+|x|)^{\min(2\beta,2)} \mathrm{I},  \label{e:lmpn-Lambda-log}  \\
\int_{\R^d} (1+|x|)^\kappa f(t,x) \dd x &\leq N. \label{e:lmpn-N-log}
\end{align}
Let $K$ be the positive number so that
\begin{equation} \label{e:lmpn-K-log}
 K = C_1 \frac{ N \left(  1+ \Lambda \right)^{(d+1)} }{ \delta }.
\end{equation}
Then, for all $(t,x) \in [0,T] \times \R^d$, we have
\begin{equation} \label{e:lmpn-estimate-log}
 f(t,x) < K t^{-d/2},
\end{equation}
where $T$ is the positive number given by
\begin{equation} \label{e:lmpn-T-log}
T = K^{2/d} \exp\left( -\frac 2 d \left( \frac{C_2 K^{2/d}}{1+\Lambda} \right)^{1/\eps} \right)
\end{equation}
The constants $C_1$ and $C_2$ depend on dimension only.
\end{thm}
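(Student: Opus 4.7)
The plan is to run the sliding-barrier argument of Theorem~\ref{t:local-max-pple-nonlinear} essentially verbatim with $p=1$, and then use the log factor in the nonlinearity to fill the gap left by the collapse of the time-scaling exponent $(d+1)(1-\tfrac{\alpha d}{2p})$ to zero in this borderline regime. I set $m(t) = K t^{-d/2}$ and suppose, for contradiction, that there is a first time $t_0 \in (0,T)$ at which some $x_0$ satisfies $f(t_0,x_0) > \tfrac{99}{100}m(t_0)$. With $r_0 = \sqrt{t_0}(1+|x_0|)^{\min(\beta,1)}$, I deploy the same family of barriers $\varphi(t,x) = h\, m(t)\, U(t,x-y)$, $U(t,x) = 1 + \tfrac{t_0|x|^2}{10\, t\, r_0^2}$, parametrized by $(h,y)\in (\tfrac45,\tfrac{9}{10})\times B_{r_0}(x_0)$. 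The crossing-point analysis, the identification of the map $(t,x)\mapsto(h,y)$, the matrix-form Jacobian computation, and the determinant bound \eqref{e:lmpn-funnymatrix} all transfer unchanged.

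The only step that departs from Theorem~\ref{t:local-max-pple-nonlinear} is the estimate of $(f_t-\varphi_t)+a_{ij}(\partial_{ij}\varphi-\partial_{ij}f)$. Using the new equation together with $f(t,x)\lesssim m(t)$ on the crossing set $A$, this is now bounded by
\[
m(t)\!\left(\frac{Cm(t)^{2/d}}{\log(1+m(t))^\eps}+\frac{1+\Lambda}{t}\right).
\]
The AM--GM type inequality $a_{ij}b_{ij}\geq d\,(\det a_{ij})^{1/d}(\det b_{ij})^{1/d}$, together with \eqref{e:lmpn-delta-log} and the definition of $r_0$, then yields the analogue of \eqref{e:lmpn-ineq-to-contradict}. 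Applying Chebyshev in the $p=1$ form (using \eqref{e:lmpn-N-log}) to bound $|A(t)|$, and factoring the log out of the time integral using the monotonicity of $m$ (so $\log(1+m(t))\geq \log(1+m(t_0))$ for $t\leq t_0$), I expect to arrive at
\[
K\lesssim \frac{N}{\delta}\!\left(\frac{C^{d+1}K^{2(d+1)/d}}{\log(1+m(t_0))^{\eps(d+1)}}+(1+\Lambda)^{d+1}\right).
\]

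Choosing $K=C_1 N(1+\Lambda)^{d+1}/\delta$ with $C_1$ sufficiently large makes the linear (second) term absorbable into the left-hand side. The residual condition for a contradiction reduces to $\log(1+m(t_0))^\eps\gtrsim C K^{2/d}/(1+\Lambda)$. The definition of $T$ in \eqref{e:lmpn-T-log} is engineered so that $1+m(T)\geq \exp\!\bigl((C_2 K^{2/d}/(1+\Lambda))^{1/\eps}\bigr)$, and since $m(t_0)\geq m(T)$ whenever $t_0\leq T$, the required lower bound on $\log(1+m(t_0))$ holds, closing the contradiction.

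The main obstacle is precisely the borderline scaling: when $\alpha d/(2p)=1$, the polynomial-in-$t_0$ margin that drives the contradiction in Theorem~\ref{t:local-max-pple-nonlinear} vanishes, so one cannot defeat the nonlinear term by shrinking the time horizon alone. The log weight restores a logarithmic gain, which after a routine transcendental manipulation (solving $\log(1+K t_0^{-d/2})^\eps\gtrsim K^{2/d}/(1+\Lambda)$ for $t_0$) becomes the exponential window in \eqref{e:lmpn-T-log}. The remaining technical checks---that $r_0\leq 1+|x_0|$ and $r_0^d/\det(a_{ij})\lesssim t_0^{d/2}(1+|x_0|)^\kappa$ under the hypothesis $\kappa>-\beta d$---are identical to those in Theorem~\ref{t:local-max-pple-nonlinear}.
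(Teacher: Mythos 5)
Your proposal follows exactly the paper's approach: the paper's proof of this theorem is a short sketch stating that one repeats the proof of Theorem~\ref{t:local-max-pple-nonlinear} with $p=1$, $\alpha=2/d$, replacing each occurrence of $m(t)^\alpha$ by $m(t)^{2/d}/\log(1+m(t))^\eps$, extracting the log factor from the time integral by monotonicity of $m$, and then choosing $K$ to absorb the linear term and $T$ to kill the nonlinear one, which is precisely what you do. Your use of $m(t_0)$ followed by $m(t_0)\geq m(T)$ is a minor bookkeeping variant of the paper's direct use of $m(T)=KT^{-d/2}$; the content is identical.
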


The proof of Theorem \ref{t:local-max-pple-nonlinear-log} is essentially the same as the proof of Theorem \ref{t:local-max-pple-nonlinear}. Below, we explain the small modification that we need.

\begin{proof}[Sketch proof]
The proof is identical to the one of Theorem \ref{t:local-max-pple-nonlinear}  with $\alpha = 2/d$ and $p=1$, replacing every occurrence of $m(t)^\alpha$ with $m(t)^{2/d} / \log(1+m(t))^\eps$.

Instead of \eqref{e:lmpn-specifictocontradict}, we get the inequality
\[ K \lesssim \frac N \delta \left( C^{d+1} \frac{K^{2+2/d}}{\log(1+K T^{-d/2})^{\eps(d+1)}} + (1+\Lambda)^{d+1} \right).\]

Our choice of $K$ and $T$ consists in picking $K$ large so that the left hand side is larger than twice the second term in the right hand side. Then we pick $T$ small, so that to make the first term in the right hand side smaller than the second. With these choices, explicitly given in \eqref{e:lmpn-K-log} and \eqref{e:lmpn-T-log}, we arrive to a contradiction and finish the proof.
\end{proof}

\begin{cor} \label{c:local-max-pple}
Let $f$, $p$, $K$ and $T$ be as in Theorem \ref{t:local-max-pple-nonlinear} or Theorem \ref{t:local-max-pple-nonlinear-log}. Then, we have the estimate
\[ f(t,x) < \begin{cases}
Kt^{-d/(2p)} & \text{ if $t \leq T$}, \\
KT^{-d/(2p)} & \text{ if $t \geq T$}.
\end{cases}.\]
\end{cor}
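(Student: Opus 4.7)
My plan is to reduce everything to the two theorems just proved. For $t \leq T$ there is nothing new to establish: the bound $f(t,x) < K t^{-d/(2p)}$ is exactly the conclusion of Theorem \ref{t:local-max-pple-nonlinear} in the subcritical regime $\alpha < 2p/d$, and of Theorem \ref{t:local-max-pple-nonlinear-log} in the borderline case $\alpha = 2/d$, $p = 1$. So the corollary is really a statement about what happens past the threshold $T$.

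The substantive step is the regime $t \geq T$, and the key observation is that every hypothesis of both theorems is invariant under translation in time. The coefficient bounds \eqref{e:lmpn-delta}--\eqref{e:lmpn-Lambda} (and their log-analogues) are uniform in $t$, and the weighted integral bound \eqref{e:lmpn-N} is assumed to hold at every time $s \geq 0$. Given $t \geq T$ (inside the domain of $f$), I would set
\[ \tilde f(s, x) := f(s + (t - T), x), \qquad \tilde a_{ij}(s, x) := a_{ij}(s + (t - T), x), \]
and check routinely that $\tilde f$ satisfies the hypotheses of the relevant theorem with identical constants $\delta$, $\Lambda$, $\kappa$, $\beta$, $p$, $N$. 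Applying that theorem to $\tilde f$ at time $s = T$ then gives
\[ f(t,x) = \tilde f(T, x) < K T^{-d/(2p)}, \]
which is the second case of the corollary.

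I do not expect any real obstacle here; the argument is a bookkeeping one. The only minor point to be careful about is the domain: the theorems are stated for functions on $[0,1] \times \R^d$, while $\tilde f$ lives on a shifted interval. This is harmless because the conclusion of Theorem \ref{t:local-max-pple-nonlinear} at a single time $s \leq T$ uses $\tilde f$ only on $[0, s]$, precisely where $\tilde f$ is defined whenever $t$ lies in the domain of $f$. In short, the corollary simply repackages the two regimes into a single non-increasing majorant $K \min(t, T)^{-d/(2p)}$ by exploiting the time-translation invariance of the hypotheses.
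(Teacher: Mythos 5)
Your proof is correct and matches the paper's argument exactly: the paper likewise observes that the $t\leq T$ case is just the theorems, and handles $t>T$ by applying the same theorems to the translated function $\tilde f(s,x)=f(s+t-T,x)$. Your additional remarks on why the hypotheses are translation-invariant are the same routine checks the paper leaves implicit.
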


\begin{proof}
The bound for $t \leq T$ is the result of Theorems \ref{t:local-max-pple-nonlinear} and \ref{t:local-max-pple-nonlinear-log}. For $t > T$, we apply the same theorems to the translated function
\[ \tilde f(s,x) = f(s+t-T,x).\]
\end{proof}

\begin{remark}
Note that the proof of Theorems \ref{t:local-max-pple-nonlinear} and \ref{t:local-max-pple-nonlinear-log} are nonvariational. They do not necessarily apply to solutions of the equation in the sense of distributions. The right notion of solution that would make (essentially) the same proof hold is that of \emph{viscosity solution} when the second order term $a_{ij}(t,v) \partial_{ij} f$ is replaced by the extremal operator in terms of the inequalities \eqref{e:lmpn-delta} and \eqref{e:lmpn-Lambda}. This notion of solution may not be an appropriate starting point for the Landau equation.
\end{remark}

\subsection{Unimprovability of the upper bounds}

We show that the value of $p$ in the assumptions of Theorem \ref{t:local-max-pple-nonlinear} is optimal. The next proposition shows that the upper bound cannot hold if $\alpha \geq 2p/d$. In particular, it is only possible to handle the borderline case $\alpha = 2p/d$ making some slightly stronger assumption, like the logarithmic correction of Theorem \ref{t:local-max-pple-nonlinear-log}.

\begin{prop} \label{p:unimprovability}
Let $\alpha \geq 2p/d$. There exists a function $f: [0,1) \times \R^d \to [0,\infty)$, supported in $[0,1) \times B_1$, so that $\|f(t,\cdot)\|_{L^p}$ is constant in time and
\begin{equation} \label{e:unimp}
 f_t \leq \lap f + f^{1+\alpha},
\end{equation}
however $f(t,0) \to +\infty$ as $t \to 1$.
\end{prop}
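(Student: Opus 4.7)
The plan is to construct a self-similar blow-up profile. Setting $\tau = 1-t$, I would take
$$ f(t,x) = \tau^{-d/(2p)}\, g(\tau^{-1/2} x), $$
with $g$ a nonnegative function compactly supported in $B_1$, to be chosen. The exponent $-d/(2p)$ is the unique one that makes the change of variable $y = \tau^{-1/2} x$ produce $\|f(t,\cdot)\|_{L^p} = \|g\|_{L^p}$ for every $t \in [0,1)$; the diffusive scale $\tau^{1/2}\le 1$ keeps the support of $f(t,\cdot)$ inside $B_1$; and $f(t,0) = \tau^{-d/(2p)} g(0) \to +\infty$ as $t \to 1$ provided $g(0)>0$. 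Thus all the structural requirements of the proposition are automatic with this ansatz; the only real question is whether we can arrange \eqref{e:unimp}.

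Differentiating and writing $y = \tau^{-1/2}x$, one checks
\begin{align*}
f_t - \lap f &= \tau^{-d/(2p)-1}\bigl[\tfrac{d}{2p}\, g(y) + \tfrac{1}{2}\, y\cdot \grad g(y) - \lap g(y)\bigr], \\
f(t,x)^{1+\alpha} &= \tau^{-d(1+\alpha)/(2p)}\, g(y)^{1+\alpha}.
\end{align*}
Denote the bracket by $(Lg)(y)$. Dividing \eqref{e:unimp} through by $\tau^{-d(1+\alpha)/(2p)}$ reduces it to
$$ \tau^{(d\alpha-2p)/(2p)}\,(Lg)(y) \leq g(y)^{1+\alpha}. $$
Because $\alpha \geq 2p/d$, the exponent of $\tau$ is nonnegative, so $\tau^{(d\alpha-2p)/(2p)} \leq 1$ on $(0,1]$. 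It therefore suffices to produce a profile $g$ with $(Lg)(y) \leq g(y)^{1+\alpha}$ pointwise on $\R^d$, since wherever $Lg\leq 0$ the inequality is automatic.

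For the profile I would take $g = A\phi$ with $\phi(y) = (1-|y|^2)_+^m$, $m \geq 3$ (so that $\phi \in C^2(\R^d)$), leaving the amplitude $A > 0$ to be fixed at the end. The desired inequality reads $L\phi(y) \leq A^\alpha\, \phi(y)^{1+\alpha}$, which is the same as showing that the ratio
$$ Q(y) := \frac{L\phi(y)}{\phi(y)^{1+\alpha}} $$
is bounded above on $\{\phi>0\} = B_1$; then any $A$ with $A^\alpha \geq \sup_{B_1} Q$ does the job, and outside $B_1$ both sides of \eqref{e:unimp} vanish. Writing $s = 1 - |y|^2$, a direct computation gives $L\phi(y) = s^{m-2}P(s)$ with $P$ a quadratic polynomial satisfying $P(0) = -4m(m-1) < 0$, while $\phi(y)^{1+\alpha} = s^{m(1+\alpha)}$. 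Hence $Q(y) = s^{-2-m\alpha}P(s) \to -\infty$ as $|y| \to 1$, and $Q$ is smooth in the interior, so $\sup_{B_1} Q < \infty$.

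The main obstacle I anticipate is precisely this last step: ruling out that $Q$ blows up somewhere in $B_1$. The worry would be a region where $\phi$ is small but $L\phi$ is positive. The explicit factorization $L\phi = s^{m-2}P(s)$ shows that this cannot happen, because near $\partial B_1$ the leading $s^{m-2}$ term in $L\phi$ comes from $-\lap\phi$ and has the strictly negative coefficient $-4m(m-1)$, so $L\phi \to -\infty$ faster than $\phi^{1+\alpha} \to 0$. With this bound secured, choosing $A$ large enough produces a function $f$ with all the properties required by the proposition.
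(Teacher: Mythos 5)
Your proof is correct and takes essentially the same route as the paper's: the same self-similar ansatz $f(t,x) = (1-t)^{-d/(2p)}\,\varphi\!\left(x/(1-t)^{1/2}\right)$, the same use of $\alpha\geq 2p/d$ to reduce \eqref{e:unimp} to a $\tau$-independent stationary inequality for the profile, and the same device of scaling the profile by a large amplitude so that the nonlinear term dominates in the interior while the Laplacian dominates near $\bdary B_1$. The only difference is cosmetic: you work with the explicit profile $(1-|y|^2)_+^m$ and fold the boundary/interior dichotomy into the single boundedness statement $\sup_{B_1} Q < \infty$, whereas the paper chooses a generic radial bump function and treats the two regions separately via the limit condition $\lap\varphi/\varphi\to+\infty$ at $\bdary B_1$.
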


\begin{proof}
Let $\varphi : \R^d \to [0,\infty)$ be a smooth, nonnegative function. We choose $\varphi$ radially symmetric and non-increasing along rays. Let us also choose it so that $\supp \varphi = \overline B_1$ and note that in this case we have, for any unit vector $e$,
\begin{equation} \label{e:unimprov-lap-wis}
  \lim_{r \to 1} \frac{\lap \varphi(re)}{|\grad \varphi (re)| } = \lim_{r \to 1} \frac{\lap \varphi(re)}{\varphi (re) } = +\infty.
\end{equation}

We set
\[ f(t,x) = (1-t)^{-\frac d {2p}} \varphi \left( \frac{x} {(1-t)^{1/2}} \right).\]
Thus,
\begin{align*}
f_t(t,x) &= \frac 1 {1-t} \left( \frac d {2p} \varphi \left( \frac{x} {(1-t)^{1/2}} \right) + \frac 12 \left( \frac{x} {(1-t)^{1/2}} \right) \cdot \grad \varphi \left( \frac{x} {(1-t)^{1/2}} \right) \right), \\
\lap f(t,x) &= \frac 1 {1-t} \lap \varphi\left( \frac{x} {(1-t)^{1/2}} \right), \\
f^{1+\alpha} &= (1-t)^{-\frac {d\alpha}{2p}} \varphi\left( \frac{x} {(1-t)^{1/2}} \right)^{1+\alpha}
\end{align*}
We see that \eqref{e:unimp} is satisfied provided that $d \alpha / (2p) \geq 1$ and 
\begin{equation} \label{e:unimprov-statioary}
 \frac d {2p} \varphi(y) + \frac 12 y \cdot \grad \varphi(y) \leq \lap \varphi(y) + \varphi(y)^{1+\alpha}.
\end{equation}

Note that we used the assumption $\alpha > 2p/d$ here to justify that $(1-t)^{-\frac {d\alpha}{2p}} \geq 1$. This is the only place where that assumption is used.

From \eqref{e:unimprov-lap-wis}, we see that the term $\lap \varphi(y)$ is larger than the left hand side for $y \in B_1 \setminus B_{1-\delta}$ with $\delta>0$ small enough. Note that in this case we control the left hand side with the first term in the right hand side only. All these terms are linear, so the same inequality holds for any function $k \varphi(y)$ instead of $\varphi(y)$, if $k > 0$.

For $y \in \overline{B_{1-\delta}}$, we have $\varphi(y) \geq \kappa$ for some positive number $\kappa > 0$. If the inequality \eqref{e:unimprov-statioary} did not hold in $\overline{B_{1-\delta}}$, we would replace $\varphi(y)$ with $K \varphi(y)$ for some large constant $K$. The inequality \eqref{e:unimprov-statioary} will now hold in $\overline{B_{1-\delta}(y)}$ as well since the second term of the right hand side is raised to a power larger than one.
\end{proof}

\section{The Landau equation for moderately soft potentials}

We focus on the Landau equation \eqref{e:intro-landau} for the rest of this paper. The solution $f$ is a nonnegative function whose $L^1$ norm and second moment are constant in $t$.  Moreover, its entropy is monotone decreasing. That means
\begin{align*}
M &:= \int_{\R^d} f(t,v) \dd v && \text{ is constant in } t, \\
E &:= \int_{\R^d} |v|^2 f(t,v) \dd v && \text{ is constant in } t, \\
H(t) &:= \int_{\R^d} f(t,v) \log f(t,v) \dd v && \text{ is non-increasing in } t.
\end{align*}

\subsection{Ellipticity estimates for $\gamma < 0$.}

In this section, we show upper and lower bounds for the coefficients $\bar a_{ij}$ and $\bar c$. Throughout this section, we assume that $f$ is a nonnegative function satisfying the following bounds.
\begin{align}
M_1 \leq \int_{\R^d} f(v) \dd v &\leq M_0, \label{e:landau-mass-bound} \\
\int_{\R^d} |v|^2 f(v) \dd v &\leq E_0, \label{e:landau-energy-bound}  \\
\int_{\R^d} f(v) \log f(v) \dd v &\leq H_0. \label{e:landau-entropy-bound} 
\end{align}

It is well known that we can conclude that $f$ is bounded in $L \log L$. That means that there exists a $\tilde H_0$ depending on $M_0$, $E_0$ and $H_0$ so that
\begin{equation} \label{e:landau-entropyprime-bound} 
 \int_{\R^d} f(v) \log(1+f(v)) \dd v \leq \tilde H_0.
\end{equation}

The function $f$ solving the Landau equation depends on $t$. In this section the time dependence is irrelevant. These computations hold for every fixed $t$. Thus, we omit the variable $t$ from the formulas.

The following two lemmas provide upper and lower bounds for $\bar a_{ij}$. They follow from standard computations, which probably appeared first in \cite{desvillettes2000spatially} for the case of hard potentials $\gamma \geq 0$. The following two lemmas are thus just an adaptation of the computation in \cite{desvillettes2000spatially} to the case of soft potentials.

\begin{lemma} \label{l:coeffs-moderately-soft-potentials}
Let $f$ satisfy \eqref{e:landau-mass-bound}, \eqref{e:landau-energy-bound} and \eqref{e:landau-entropy-bound}, and $\bar a_{ij}$ be the diffusion coefficients of the Landau equation given in \eqref{e:intro-aij}. Assume $\gamma \in [-2,0)$. The following estimates hold.
\begin{align*}
| \bar a_{ij} | &\leq C (1+|v|^{\gamma+2}), \\
\det \bar a_{ij} &\geq c (1+|v|)^{(d-1)(\gamma + 2) + \gamma},
\end{align*}
where $c$ and $C$ depend on the mass, energy and entropy of $f$.
\end{lemma}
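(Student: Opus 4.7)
I would prove the two inequalities separately. The upper bound is a direct convolution estimate, while the lower bound on $\det \bar a_{ij}$ is the substantive part: my plan is to extract a ``good set'' $G$ on which $f$ is bounded both above and below (using the $L\log L$ consequence \eqref{e:landau-entropyprime-bound}), and then control each eigenvalue of $\bar a$ via the quadratic form $e^T \bar a e$ together with a Courant--Fischer argument.

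For the upper bound, since $|\bar a_{ij}(v)| \lesssim \int |w|^{\gamma+2} f(v-w)\,\dd w = \int |v-u|^{\gamma+2} f(u)\,\dd u$, I would split at $|u| = |v|/2$. On the inner region $|v-u| \leq (3/2)|v|$, contributing $\lesssim |v|^{\gamma+2} M_0$. On the outer region $|v-u| \leq 3|u|$, and the pointwise inequality $|u|^{\gamma+2} \leq 1 + |u|^2$ (valid since $\gamma+2 \in [0,2]$) gives a contribution $\lesssim M_0 + E_0$. Combining with a trivial bound for bounded $|v|$ yields the desired estimate.

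For the lower bound, first use Chebyshev with the energy to pick $R$ so that $\int_{B_R} f \geq 3M_1/4$, use \eqref{e:landau-entropyprime-bound} to pick $K$ with $\int_{\{f>K\}} f \leq M_1/4$, and pick $\eta$ so small that $\eta |B_R| \leq M_1/4$. The set $G := \{u \in B_R : \eta \leq f(u) \leq K\}$ then satisfies $\int_G f \geq M_1/4$, hence $|G| \geq \mu_0 := M_1/(4K)$. For any unit vector $e$, a direct computation using $w = v-u$ gives
\[
a_{d,\gamma}^{-1}\, e^T \bar a(v) e = \int_{\R^d} |v^e - u^e|^2\, |v-u|^\gamma f(u)\,\dd u,
\]
where $u^e := u - (u\cdot e)e$ denotes the component of $u$ perpendicular to $e$. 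For $u \in G$, $|v-u| \leq |v|+R$, so $|v-u|^\gamma \gtrsim (1+|v|)^\gamma$ (since $\gamma < 0$), reducing the task to bounding $\int_G |v^e - u^e|^2\,\dd u$ from below. Its minimum over $v^e$ is the variance of $u^e$ on $G$, which is bounded uniformly in $e$ by the following geometric fact: any cylinder of radius $\rho$ parallel to $e$ meets $B_R$ in measure $\lesssim R \rho^{d-1}$, so for $\rho \sim (\mu_0/R)^{1/(d-1)}$ at least half of $G$ lies outside every such cylinder, and the variance is $\gtrsim \mu_0 \rho^2 > 0$. This gives $\lambda_{\min}(\bar a(v)) \gtrsim (1+|v|)^\gamma$.

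For the remaining $d-1$ eigenvalues, I would apply the same identity to directions $e \perp \hat v := v/|v|$. For $|v| \gg R$, the angle between $w=v-u$ and $e$ stays bounded away from $0$ on $G$ because $\hat w$ is close to $\hat v$; combined with $|v-u|^{\gamma+2} \gtrsim |v|^{\gamma+2}$ (using $\gamma+2 \geq 0$), this gives $e^T \bar a e \gtrsim (1+|v|)^{\gamma+2}$. For bounded $|v|$ the smallest-eigenvalue bound above already dominates. Applying Courant--Fischer to any $k$-dimensional subspace of $\hat v^\perp$ (for each $k \leq d-1$) then shows the top $d-1$ eigenvalues are each $\gtrsim (1+|v|)^{\gamma+2}$, and multiplying with the bottom eigenvalue yields the claimed determinant bound. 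The main obstacle is the uniform variance estimate: the entropy hypothesis is essential there, because it produces a set of positive \emph{Lebesgue} measure (not merely positive $f$-measure), which is what makes the cylinder argument quantitative and independent of both $v$ and $e$.
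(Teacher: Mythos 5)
Your proof is correct and takes essentially the same route as the paper: construct a set $G \subset B_R$ of Lebesgue measure $\gtrsim 1$ on which $f$ is bounded above and below (via mass, energy, and the $L\log L$ consequence of the entropy bound, as in the paper's Lemma~\ref{l:thick-set}), bound the quadratic form $\langle \bar a(v) e, e\rangle$ from below first for a generic unit direction $e$ (giving $\gtrsim (1+|v|)^\gamma$) and then for directions $e\perp v$ when $|v|$ is large (giving $\gtrsim (1+|v|)^{\gamma+2}$), and combine via Courant--Fischer to obtain the determinant estimate. The only notable difference is in the generic-direction bound: the paper works with the factorization $\bigl(1-(w\cdot e/|w|)^2\bigr)|w|^{\gamma+2}$, removes a $|v|$-dependent cone of opening $\eps_0 \approx |v|^{-2}$ around $e$, and extracts the $(1+|v|)^{-2}$ from $\eps_0$, whereas you factor the integrand as $|v^e-u^e|^2\,|v-u|^\gamma$, pull out $|v-u|^\gamma \gtrsim (1+|v|)^\gamma$ directly from $\gamma<0$ and $|v-u|\leq |v|+R$, and reduce the remainder to a uniform (in $v$ and $e$) variance bound via the cylinder estimate --- a slightly cleaner packaging of the same geometric idea that avoids any separate treatment of small $|v|$.
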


\begin{lemma} \label{l:coeffs-very-soft-potentials}
Let $f$ satisfy \eqref{e:landau-mass-bound}, \eqref{e:landau-energy-bound} and \eqref{e:landau-entropy-bound}, and $\bar a_{ij}$ be the diffusion coefficients of the Landau equation given in \eqref{e:intro-aij}. Assume $-d-2 < \gamma < -2$. The following estimates hold.
\begin{align*}
| \bar a_{ij} | &\leq C |f|_{L^\infty}^{-(\gamma+2)/d}, \\
\det \bar a_{ij} &\geq c (1+|v|)^{(d-1)(\gamma+2) + \gamma},
\end{align*}
where $c$ and $C$ depend on the mass, energy and entropy of $f$.
\end{lemma}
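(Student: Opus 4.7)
The plan is to split the argument into an upper bound (which will involve $\|f\|_{L^\infty}$) and a lower bound (essentially identical to the moderately soft case), following Desvillettes--Villani style computations as in the proof of Lemma \ref{l:coeffs-moderately-soft-potentials}. The new feature in the very soft range $\gamma \in (-d-2,-2)$ is that the kernel $|w|^{\gamma+2}$ is now singular at $w=0$, though still locally integrable because $\gamma+2 > -d$; this is precisely why the $L^\infty$ norm of $f$ has to appear in the upper bound.

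For the upper bound I would reduce to estimating $I(v) := \int_{\R^d} |w|^{\gamma+2} f(v-w)\dd w$ and split the integral at a radius $R>0$. On $\{|w|<R\}$, using $f \leq \|f\|_{L^\infty}$ and the local integrability of $|w|^{\gamma+2}$ gives a contribution of order $\|f\|_{L^\infty} R^{d+\gamma+2}$. On $\{|w|>R\}$, using $|w|^{\gamma+2} \leq R^{\gamma+2}$ (which is valid since $\gamma+2<0$) and the $L^1$ bound \eqref{e:landau-mass-bound} gives a contribution of order $M_0 R^{\gamma+2}$. Optimizing in $R$ produces $R \approx \|f\|_{L^\infty}^{-1/d}$ (with constants depending on $M_0$), and both contributions then balance at $\|f\|_{L^\infty}^{-(\gamma+2)/d}$, which is exactly the claim.

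For the lower bound, the argument used in Lemma \ref{l:coeffs-moderately-soft-potentials} should apply essentially verbatim. It suffices to lower bound $\bar a_{ij}(v)\xi_i\xi_j$ for each unit vector $\xi$; after the change of variables $u=v-w$ this integral reads
\[
\bar a_{ij}(v)\xi_i\xi_j \;=\; a_{d,\gamma}\int_{\R^d} |v-u|^\gamma\,|P_{\xi^\perp}(v-u)|^2\,f(u)\dd u,
\]
where $P_{\xi^\perp}$ is the orthogonal projection onto $\xi^\perp$. I would restrict the integration to $\{|u|\leq R_0\}\setminus B_1(v)$, choosing $R_0$ large enough (depending on $M_0,E_0,H_0$, using \eqref{e:landau-entropyprime-bound}) that a fixed positive fraction of the mass sits in this set; this harmlessly excludes the singularity at $u=v$, which in any case only helps a lower bound. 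For $\xi=v/|v|$, one has $|v-u|^\gamma \gtrsim (1+|v|)^\gamma$ on the integration set, and the standard Desvillettes--Villani non-concentration estimate (a measure with bounded mass, energy and $L\log L$ norm has nontrivial $L^2$ mass off any hyperplane) provides a uniform lower bound on $\int |P_{v^\perp} u|^2 f(u)\dd u$, giving $\bar a \xi\cdot\xi \gtrsim (1+|v|)^\gamma$. For $\xi\perp v$, instead $|P_{\xi^\perp}(v-u)|^2 \gtrsim |v|^2$ on the integration set, yielding $\bar a \xi\cdot\xi \gtrsim (1+|v|)^{\gamma+2}$. Multiplying these eigenvalue lower bounds across the $d-1$ directions perpendicular to $v$ and the one parallel direction produces $\det \bar a_{ij}\gtrsim (1+|v|)^{(d-1)(\gamma+2)+\gamma}$.

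The main obstacle is really the upper bound: it is the one step where $\|f\|_{L^\infty}$ must enter, and the need to balance the singular near-field contribution against the $L^1$-controlled far-field is what forces the specific exponent $-(\gamma+2)/d$. The lower bound itself transfers unchanged from the moderately soft case, since the singular region at $u=v$ can simply be excluded from the domain of integration; beyond that, the argument only uses positivity of the integrand together with the non-concentration consequences of the mass, energy and entropy bounds, none of which depend on the sign of $\gamma+2$.
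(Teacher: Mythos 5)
Your proposal is correct and takes essentially the same route as the paper: the upper bound is the near/far splitting with optimization in $R$ that the paper explicitly defers to (the same computation as for $\bar c$ in Lemma~\ref{l:c-coeff}), and for the lower bound the paper's ``Proof of the lower bounds in Lemmas~\ref{l:coeffs-moderately-soft-potentials} and \ref{l:coeffs-very-soft-potentials}'' is in fact written so as to cover both ranges of $\gamma$ at once, exactly as you anticipate, since the directional/cone estimate and Lemma~\ref{l:thick-set} never use the sign of $\gamma+2$. One small caveat on the last step: passing from directional lower bounds on $\langle \bar a\,\xi,\xi\rangle$ (for $\xi\perp v$ and $\xi\parallel v$) to a lower bound on $\det\bar a_{ij}$ should go through the Courant min--max characterization of eigenvalues as the paper spells out, rather than literally multiplying the quadratic-form values at those particular $\xi$, since the eigenvectors of $\bar a_{ij}$ need not be aligned with $v$.
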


The previous two lemmas are relatively standard. The upper bounds follow easily from the formula \eqref{e:intro-aij} in the same way as the upper bound for $\bar c$ in Lemma \ref{l:c-coeff} below. We provide a complete proof of the lower bound in Lemma \ref{l:coeffs-moderately-soft-potentials} which is the one that is used in this article. The proof of this bound also follows the lines of the computation in \cite{desvillettes2000spatially}.

We need the following auxiliary lemma first. It is the same as Lemma 4.6 in \cite{silvestre2014new}.

\begin{lemma} \label{l:thick-set}
Assume that $f$ satisfies \eqref{e:landau-mass-bound}, \eqref{e:landau-energy-bound} and \eqref{e:landau-entropyprime-bound}. There exists three positive numbers $R>0$, $\ell>0$ and $\mu>0$, depending only on $M_1$, $E_0$, $\tilde H_0$ and the dimension $d$, so that 
\[ |\{ x \in B_R : f(x) \geq \ell \}| \geq \mu.\]
\end{lemma}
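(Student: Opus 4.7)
The plan is a three-tier truncation argument that combines the three hypotheses in sequence to pin down a moderate layer of values of $f$. First, I would use the energy bound \eqref{e:landau-energy-bound} together with Chebyshev's inequality to localize the mass in a fixed ball: $\int_{|v| \geq R} f \leq E_0/R^2$, so choosing $R = \sqrt{2 E_0/M_1}$ (which depends only on $M_1$ and $E_0$) gives
\[ \int_{B_R} f(v) \dd v \geq M - \frac{E_0}{R^2} \geq M_1 - \frac{M_1}{2} = \frac{M_1}{2}. \]

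Second, I would use the $L \log L$ bound \eqref{e:landau-entropyprime-bound} to peel off the very high values. On the set $\{f \geq L\}$ one has $\log(1+f) \geq \log(1+L)$, hence
\[ \int_{\{f \geq L\}} f \dd v \leq \frac{\tilde H_0}{\log(1+L)}, \]
so taking $L$ large enough depending only on $M_1$ and $\tilde H_0$ makes this at most $M_1/8$. At the same time, choose $\ell$ so small that $\ell \, |B_R| \leq M_1/8$; this fixes $\ell$ in terms of $M_1$, $E_0$, and $d$ alone.

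Finally, decompose $B_R$ into the three layers $\{f < \ell\}$, $\{\ell \leq f < L\}$, and $\{f \geq L\}$. The low layer contributes at most $\ell \, |B_R| \leq M_1/8$ and the high layer contributes at most $M_1/8$, so the middle layer must carry at least $M_1/2 - M_1/4 = M_1/4$ of the integral. Since $f < L$ on that middle layer,
\[ \frac{M_1}{4} \leq \int_{B_R \cap \{\ell \leq f < L\}} f \dd v \leq L \cdot \bigl| \{v \in B_R : f(v) \geq \ell\} \bigr|, \]
and the lemma follows with $\mu = M_1/(4L)$. The only obstacle is bookkeeping: the three thresholds $R$, $L$, $\ell$ must be chosen so that the three ``bad'' contributions (mass outside $B_R$, mass above $L$, contribution of the sub-$\ell$ layer) together absorb strictly less than the lower bound $M_1/2$ on $\int_{B_R} f$. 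Conceptually, the energy bound prevents escape to infinity, the $L\log L$ bound prevents concentration at arbitrarily high values, and the mass bound then forces a nontrivial moderate stratum where both height and measure are controlled from below.
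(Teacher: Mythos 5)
Your proof is correct and follows essentially the same route as the paper: Chebyshev with the second moment to trap a fixed fraction of the mass in $B_R$, the $L\log L$ bound to truncate high values, and the choice of $\ell$ to discard the low layer, leaving a middle layer whose measure is bounded below. The only difference is trivial bookkeeping of the fractions ($M_1/8 + M_1/8$ versus the paper's $M_1/4 + M_1/8$).
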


\begin{proof}
Let us choose $R$ so that $E_0 / R^2 \leq M_1/2$. In that way, we have
\[ \int_{\R^d \setminus B_R} f(v) \dd v \leq R^{-2} E_0 \leq  \frac {M_1} 2.\]
Therefore, we make sure that 
\[ \int_{B_R} f(v) \dd v \geq \frac {M_1} 2.\]

Let $\ell$ be so that $\ell |B_R| = M_1/4$. Thus,
\[ \int_{B_R \cap \{f \leq \ell\}} f(v) \dd v \leq \ell |B_R| = \frac {M_1} 4.\]

We now use the entropy bound. Let $T>0$ be the number so that $\tilde H_0 / \log(1+T) = M_1/8$. Therefore
\[ \int_{\{f > T\}} f(v) \dd v \leq \frac {\tilde H_0} {\log(1+T)} = \frac {M_1} 8.\]
And thus,
\[ \int_{\{T \geq f \geq \ell\}} f(v) \dd v \geq \frac {M_1} 8\]

Finally, 
\[ |\{f \geq \ell\}| \geq |\{T \geq f \geq \ell\}| \geq \frac {M_1}{8T} =: \mu.\]
\end{proof}

\begin{proof}[Proof of the lower bounds in Lemmas \ref{l:coeffs-moderately-soft-potentials} and \ref{l:coeffs-very-soft-potentials}]
The proofs of the lower bound for $\det \bar a_{ij}$ follows by analyzing the expression \eqref{e:intro-aij} in light of Lemma \ref{l:thick-set}.

Let $R$, $\ell$ and $\mu$ be as in Lemma \ref{l:thick-set}. Let $e$ be any unit vector. Using the formula \eqref{e:intro-aij}, we get
\begin{align*}
\langle \bar a_{ij} e, e \rangle &= \int_{\R^d} \left( 1 - \left( \frac{w\cdot e}{|w|} \right)^2 \right) |w|^{\gamma+2} f(v-w) \dd w, \\
&\geq \ell \int_{\{v+w \in B_R\} \cap \{f(v+w) \geq \ell\}} \left( 1 - \left( \frac{w\cdot e}{|w|} \right)^2 \right) |w|^{\gamma+2} \dd w.
\end{align*}
In order to get a good lower bound for the left-hand side, we naturally need to avoid the set of points $w$ which are aligned to $e$ so that $w \cdot e / |w|$ is close to one.

Depending on $v$, $R$ and $\mu$, we can always find an $\eps_0 = \eps_0(|v|,R,\mu) > 0$ so that
\[ \left\vert \left\{ w : v+w \in B_R \text{ and }  \left( \frac{w\cdot e}{|w|} \right)^2 > 1-\eps_0 \right\} \right\vert < \mu / 2. \]
From this, we infer that
\begin{equation} \label{e:lowerboundaij-1}
 \langle \bar a_{ij} e, e \rangle \geq \frac \mu 2 \ell \eps_0 |v+R|^{\gamma+2} \geq c \eps_0 (1+|v|)^{\gamma+2}.
\end{equation}
This already establishes the lower bound for small values of $|v|$. The key of the lemma is to analyze how $\eps_0$ depends on $|v|$. That depends on the orientation of the vector $e$ respect to $v$. For the remaining of the proof, let us assume that $|v| > 2R$.

Let us consider first the case that $v \cdot e = 0$. In this case, for every $w$ so that $v+w \in B_R$, since $|v| > 2R$, we have
\[ \left( \frac{ w \cdot e } {|w|} \right)^2 \leq \frac 1 5.\]
So, when $e \cdot v = 0$, we can choose $\eps_0 = 4/5$ independently of $R$.

Let us now consider the case that $e$ is not perpendicular to $v$. In this case, the set
\[ \left \{ w : \left( \frac{w\cdot e}{|w|} \right)^2 > 1-\eps_0 \right\} \]
is a round cone centered at the origin, with opening angle $\approx \sqrt{\eps_0}$. Its intersection with $\{ w: v+w \in B_R \}$ is contained in a cylinder of width $\lesssim \sqrt{\eps_0} |v|$ which cut $B_R$ is some direction. The worst case scenario is when $e = v / |v|$, in which case the cylinder cuts $B_R$ diametrically. Therefore
\[ \left \{ w : v+w \in B_R \text{ and } \left( \frac{w\cdot e}{|w|} \right)^2 > 1-\eps_0 \right\} \lesssim R \left( \sqrt{\eps_0} |v| \right)^{d-1}, \]
and we must choose $\eps_0 \approx \left( \frac \mu {2R} \right)^{2/(d-1)} |v|^{-2}$.

Going back to \eqref{e:lowerboundaij-1}, and summarizing, we obtained,
\[ \langle \bar a_{ij} e, e \rangle \geq c \begin{cases}
(1+|v|)^{\gamma+2} & \text{ if } v \cdot e = 0, \\
(1+|v|)^{\gamma} & \text{ for any } v.
\end{cases}\]
In order to derive a lower bound for $\det \{\bar a_{ij}\}$, we recall the min-max formulas for eigenvalues of symmetric matrices.
\[ \lambda_k = \min_{\dim S = k} \left( \max_{\substack{e \in S \\ |e|=1 }} \, \langle \bar a_{ij} e,e\rangle \right).\]
For $k=1$, all we can say is that $\lambda_1 \geq c(1+|v|)^{\gamma}$. For $k \geq 2$, any subspace $S \subset \R^d$ will contain some unit vector $e$ perpendicular to $v$, therefore $\lambda_k \geq c(1+|v|)^{\gamma+2}$. Since $\det \bar a_{ij} = \lambda_1 \lambda_2 \dots \lambda_d$, the conclusion of the lemma follows.
\end{proof}

\begin{lemma} \label{l:c-coeff}
Let $f$ satisfy \eqref{e:landau-mass-bound}. For the function $c(t,v)$ given in \eqref{e:intro-c}, we have the following estimate for any $\gamma \in [-d,0]$.
\[ \bar c(t,v) \lesssim M_0^{1+\gamma/d} |f|_{L^\infty}^{-\gamma/d}.\]
\end{lemma}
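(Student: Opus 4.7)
The plan is a standard split of the integral defining $\bar c$ into a near region and a far region, with the cutoff radius chosen to balance the two contributions. This is essentially the Hardy--Littlewood interpolation for $|w|^\gamma$ against an $L^1 \cap L^\infty$ density.

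Fix $v$ and any radius $r > 0$, and write
\[ \bar c(t,v) = c_{d,\gamma} \int_{|w| \leq r} |w|^\gamma f(v-w) \dd w + c_{d,\gamma} \int_{|w| > r} |w|^\gamma f(v-w) \dd w =: I_1 + I_2. \]
On the near piece I would bound $f(v-w)$ by $\|f\|_{L^\infty}$ and compute the resulting integral of $|w|^\gamma$ over the ball; since $\gamma > -d$ for the integral formula to make sense, this yields
\[ I_1 \lesssim \|f\|_{L^\infty} \int_0^r \rho^{\gamma + d - 1} \dd \rho \lesssim \|f\|_{L^\infty} \, r^{d+\gamma}. \]
On the far piece I would bound $|w|^\gamma \leq r^\gamma$ (using $\gamma \leq 0$) and pull this constant out, leaving the total mass of $f$, so
\[ I_2 \leq r^\gamma \int_{\R^d} f(v-w) \dd w \leq M_0 \, r^\gamma. \]

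I would then optimize in $r$ by choosing $r^d \approx M_0 / \|f\|_{L^\infty}$, which makes the two terms of the same order. Substituting back gives
\[ \bar c(t,v) \lesssim M_0^{1 + \gamma/d} \|f\|_{L^\infty}^{-\gamma/d}, \]
which is the claimed bound. The endpoint $\gamma = -d$ must be handled separately, but in that case $\bar c = c_{d,\gamma} f \leq c_{d,\gamma} \|f\|_{L^\infty}$, which agrees with the stated estimate at the exponents $1 + \gamma/d = 0$ and $-\gamma/d = 1$. Similarly the endpoint $\gamma = 0$ reduces to $\bar c \lesssim M_0$.

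No step here looks like a real obstacle; the only thing to be slightly careful about is that the implicit constant in $\lesssim$ absorbs both $c_{d,\gamma}$ and the constants from the radial integration of $|w|^\gamma$ over $B_r$, and that one checks the endpoints $\gamma = 0$ and $\gamma = -d$ by hand so that the estimate is valid on the full range $\gamma \in [-d,0]$ as stated.
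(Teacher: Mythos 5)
Your proof is correct and uses essentially the same approach as the paper: split the convolution integral at a radius $r$, bound the near piece by $\|f\|_{L^\infty} r^{d+\gamma}$ and the far piece by $M_0 r^\gamma$, then optimize by taking $r^d \approx M_0/\|f\|_{L^\infty}$. The paper also treats the endpoint $\gamma = -d$ separately, exactly as you do.
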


\begin{proof}
The estimate is trivial in the borderline case $\gamma=-d$. For $\gamma \in (-d,0)$, we use the formula for $\bar c(t,v)$,
\begin{align*} 
 c(t,v) &= c_{d,\gamma} \int_{\R^d} |w|^\gamma f(v-w) \dd w, \\
&=  c_{d,\gamma} \int_{B_R} |w|^\gamma f(v-w) \dd w + c_{d,\gamma} \int_{\R^d \setminus B_R} |w|^\gamma f(v-w) \dd w, \\
&\leq C \left( R^{d+\gamma} \|f\|_{L^\infty} + R^\gamma  M_0 \right).
\end{align*}
The inequality above holds for any value of $R>0$. We conclude the proof picking $R =\left(  M_0  / \|f\|_{L^\infty} \right)^{1/d}$.
\end{proof}

\begin{lemma} \label{l:c-coeff-log}
Let $f$ satisfy \eqref{e:landau-mass-bound}, \eqref{e:landau-energy-bound} and \eqref{e:landau-entropy-bound}. Let $\bar c(t,v)$ be the function given in \eqref{e:intro-c}. 

We have the following estimate for any $\gamma \in (-d,0)$,
\[ \bar c(t,v) \leq C |f|_{L^\infty}^{-\gamma/d} \log(1+\|f\|_{L^\infty} )^{(d+\gamma) / (2\gamma)} , \]
where $C$ depends on $d$, $M_0$, $E_0$ and $H_0$ only.

(Note that the logarithmic term is raised to a negative power because $\gamma \in (-d,0)$)
\end{lemma}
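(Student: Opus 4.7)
The plan is to refine Lemma \ref{l:c-coeff} by exploiting the entropy bound \eqref{e:landau-entropy-bound} (equivalently \eqref{e:landau-entropyprime-bound}) in the short-range part of $\bar c(v)$. I begin with the same split used in the proof of Lemma \ref{l:c-coeff},
\[ \bar c(v) \lesssim \int_{B_R} |w|^\gamma f(v-w) \dd w + R^\gamma M_0, \]
with the second term controlled by the $L^1$ bound on $f$. The improvement will come from replacing the crude $\|f\|_\infty$ bound in the near-field integral by a finer estimate that takes advantage of entropy.

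The key new ingredient is a level-set decomposition of $f$. Writing $f = (f \wedge L) + (f-L)_+$ for a parameter $L \in (0, \|f\|_\infty]$ to be optimized, the truncated part contributes at most $\lesssim L R^{d+\gamma}$ (since $\gamma > -d$ makes $|w|^\gamma$ integrable on $B_R$), while the tail $(f-L)_+$ enjoys the entropy-based bound
\[ \|(f-L)_+\|_{L^1(\R^d)} \leq \int_{\{f > L\}} f \dd v \leq \frac{\tilde H_0}{\log(1+L)}, \]
coming from $\log(1+f) \geq \log(1+L)$ on $\{f > L\}$. The tail's contribution to the inner integral is then estimated by Hardy--Littlewood rearrangement, splitting $B_R$ at an inner radius tuned to the tail's mass to balance $\|(f-L)_+\|_\infty \leq \|f\|_\infty$ against $\|(f-L)_+\|_1$, which produces a factor of the form $\|f\|_\infty^{-\gamma/d} \log(1+L)^{-(d+\gamma)/d}$.

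Collecting the three contributions and optimizing first over $R$ (choosing $R \sim L^{-1/d}$ so that $LR^{d+\gamma}$ matches $R^\gamma M_0$) and then over $L$ close to $\|f\|_\infty$, I obtain an upper bound of the form $\|f\|_\infty^{-\gamma/d}$ multiplied by a negative power of $\log(1+\|f\|_\infty)$. The main obstacle I expect is pinpointing the precise exponent $(d+\gamma)/(2\gamma)$ stated in the lemma: the naive level-set truncation above tends to produce an exponent of $-(d+\gamma)/d$, which agrees with the claim only at $\gamma=-d/2$, so matching the stated exponent likely requires combining the entropy with a Cauchy--Schwarz step on an intermediate annulus (using $\|f\|_{L^2}^2 \lesssim \|f\|_\infty/\log(1+\|f\|_\infty)$, itself a consequence of entropy) or a carefully chosen transcendental balance between $L$ and $\|f\|_\infty$, rather than a single level-set decomposition alone.
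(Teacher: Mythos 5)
Your approach is essentially the paper's: a level-set truncation of $f$ (the paper cuts at $\lambda = \|f\|_{L^\infty}\log(1+\|f\|_{L^\infty})^{(d+\gamma)/\gamma}$; you write $f=(f\wedge L)+(f-L)_+$) combined with a radial split (the paper uses three zones $B_r$, $B_R\setminus B_r$, $\R^d\setminus B_R$; your rearrangement step on the tail implicitly selects the inner radius $r$). These are reorganizations of the same computation, and both correctly produce
\[ \bar c(t,v) \lesssim \|f\|_{L^\infty}^{-\gamma/d}\,\log\bigl(1+\|f\|_{L^\infty}\bigr)^{-(d+\gamma)/d}. \]
Where you go astray is at the very end: you treat the mismatch between your exponent $-(d+\gamma)/d$ and the stated $(d+\gamma)/(2\gamma)$ as a gap in your argument and start looking for a Cauchy--Schwarz fix. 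In fact, the paper's own proof derives exactly $-(d+\gamma)/d$ as well --- each of its three terms (inner ball with $r$, annulus with $\lambda$ and $r^\gamma$, outer region with $R$) reduces to $\|f\|_{L^\infty}^{-\gamma/d}\log(1+\|f\|_{L^\infty})^{-(d+\gamma)/d}$ with the stated choices of $r$, $R$, $\lambda$. So the exponent $(d+\gamma)/(2\gamma)$ in the lemma statement (and the consequent $-(d-2)/4$ quoted in the proof of Theorem \ref{t:landau}) is inconsistent with the proof that follows it; the two agree only at $\gamma=-d/2$. You should not try to manufacture the stated exponent, since no argument in the paper does so either, and the downstream application (Theorem \ref{t:local-max-pple-nonlinear-log} with $\gamma=-2$, $d\geq3$) only needs \emph{some} positive $\eps$, i.e., any strictly negative power of $\log(1+\|f\|_{L^\infty})$, which $-(d+\gamma)/d$ provides. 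Carry forward the exponent $-(d+\gamma)/d$; that is what the method proves and what the application requires.
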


\begin{proof}
Given the result of Lemma \ref{l:c-coeff}, we are only left to prove the case of $\|f\|_{L^\infty}$ large.

Recall that we have a bounded norm for $f$ in $L \log L$ depending on $M_0$, $E_0$ and $H_0$. In particular, \eqref{e:landau-entropyprime-bound} holds.

We use the expression for $\bar c(t,v)$,
\[ \bar c(t,v) = c_{d,\gamma} \int_{\R^d} f(v-w) |w|^\gamma \dd w.\]

This time we split the integral into three subregions: $B_r$, $B_R \setminus B_r$ and $\R^d \setminus B_R$, for $R > r > 0$ to be given by
\begin{align*}
r &= \|f\|_{L^\infty}^{-1/d} \log \left( 1 + \|f\|_{L^\infty} \right)^{-1/d}, \\
R &= \|f\|_{L^\infty}^{-1/d} \log \left( 1 + \|f\|_{L^\infty} \right)^{-(d+\gamma)/(\gamma d)}.
\end{align*}

For the innermost and outermost domains, we have a simple estimate.
\begin{align} 
\int_{B_r} f(v-w) |w|^\gamma \dd w
&\lesssim \|f\|_{L^\infty} r^{d+\gamma} = \|f\|^{-\gamma/d} \log(1+\|f\|_{L^\infty})^{-(d+\gamma)/d}, \\
\int_{\R^d \setminus B_R} f(v-w) |w|^\gamma \dd w
&\lesssim M_0 R^{\gamma}= M_0 \|f\|^{-\gamma/d} \log(1+\|f\|_{L^\infty})^{-(d+\gamma)/d}.
\end{align}

In order to estimate the middle region, we set 
\[ \lambda = \|f\|_{L^\infty} \log(1+\|f\|_{L^\infty})^{(d+\gamma)/\gamma}. \] 
Note that $(d+\gamma)/\gamma<0$. Since we only need to do this proof when $|f\|_{L^\infty}$ is sufficiently large, we can assume $\lambda \leq \|f\|_{L^\infty}$ and
\[ \log(1+\lambda) \geq \frac 12 \log(1+\|f\|_{L^\infty}).\]

Therefore,
\begin{align*} 
 \int_{B_R \setminus B_r} f(v-w) |w|^\gamma \dd w &= \int_{(B_R \setminus B_r) \cap \{f > \lambda\}} f(v-w) |w|^\gamma \dd w + \int_{(B_R \setminus B_r) \cap \{f \leq \lambda\}} f(v-w) |w|^\gamma \dd w, \\
&\leq \frac {\tilde H_0} {\log(1+\lambda)} r^\gamma + \lambda R^{d+\gamma}, \\
&= (2 N+1) \|f\|_{L^\infty}^{-\gamma/d} \log(1+\|f\|_{L^\infty})^{-(d+\gamma)/d}   
\end{align*}
We found the right bound for each of the three terms and thus we finished the proof.
\end{proof}

Finally, in order to obtain conditional upper bounds in the case $\gamma < -2$, we need estimates for $\bar a_{ij}$ and $\bar c$ in terms of $\|f\|_{L^p_\kappa}$.

\begin{lemma} \label{l:aij-coef-Lp-very-soft}
Let $f$ be a function in $L^p_\kappa$. That is
\[ \|f\|_{L^p_\kappa} = \left( \int_{\R^d} (1+|v|)^\kappa |f(v)|^p \right)^{1/p} < +\infty.\]
Assume $-d \leq \gamma<-2$, $d/(d+\gamma)> p > d/ (d+2+\gamma)$ and $\kappa > p(2+\gamma+d)-d$.

Let $\bar a_{ij}$ and $\bar c$ be the functions given in \eqref{e:intro-aij} and \eqref{e:intro-c}. Then
\begin{align*}
\bar c &\leq C \|f\|_{L^p}^{\frac{p(d+\gamma)} {d} } \|f\|_{L^\infty}^{1-\frac{p(d+\gamma)} {d}},\\
|\bar a_{ij}| &\leq C \|f\|_{L^p_\kappa} (1+|v|)^{2+\gamma+d - d/p}.
\end{align*}
for some constant $C$ depending on $p$, $\kappa$, $d$ and $\gamma$.
\end{lemma}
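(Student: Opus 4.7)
The plan is to prove the two bounds separately; each follows from a standard splitting/interpolation argument of the kind already used earlier in Section 3.

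For $\bar c$, I would mimic the proof of Lemma \ref{l:c-coeff}. Write
\[ \bar c(v) \leq c_{d,\gamma} \left( \int_{B_R} |w|^\gamma f(v-w) \dd w + \int_{\R^d \setminus B_R} |w|^\gamma f(v-w) \dd w \right), \]
with $R>0$ to be chosen at the end. For the inner piece, bound $f$ pointwise by $\|f\|_{L^\infty}$ and integrate $|w|^\gamma$ (integrable at the origin since $d+\gamma > 0$) to obtain $C\|f\|_{L^\infty} R^{d+\gamma}$. For the outer piece, apply H\"older's inequality with conjugate exponents $(p,p')$; the hypothesis $p < d/(d+\gamma)$ is equivalent to $\gamma p' + d < 0$, which is exactly what is needed to make $\int_{|w|>R} |w|^{\gamma p'} \dd w$ convergent, yielding $C\|f\|_{L^p} R^{\gamma + d/p'}$. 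Optimising in $R$ (take $R = (\|f\|_{L^p}/\|f\|_{L^\infty})^{p/d}$) balances the two contributions and gives the claimed interpolation bound.

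For $\bar a_{ij}$, I would substitute $u = v - w$ to rewrite $|\bar a_{ij}(v)| \lesssim \int_{\R^d} |v-u|^{\gamma+2} f(u) \dd u$, and then apply a weighted H\"older inequality, distributing the weight $(1+|u|)^\kappa$ between the two factors:
\[ \int |v-u|^{\gamma+2} f(u) \dd u \leq J(v)^{1/p'} \|f\|_{L^p_\kappa}, \qquad J(v) := \int_{\R^d} |v-u|^{(\gamma+2) p'} (1+|u|)^{-\kappa p'/p} \dd u. \]
By elementary algebra, the two hypotheses $p>d/(d+2+\gamma)$ and $\kappa > p(d+2+\gamma)-d$ translate respectively into $(\gamma+2)p' + d > 0$ and $(\gamma+2)p' - \kappa p'/p + d < 0$, which are precisely the integrability conditions for $J(v)$ at the singular point $u = v$ and at infinity.

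To convert the finiteness of $J(v)$ into the asserted $v$-dependent bound, I split $\R^d$ at the locus $|v-u| = 2(1+|v|)$. In the inner region, use the trivial bound $(1+|u|)^{-\kappa p'/p} \leq 1$ and compute $\int_{|v-u|<2(1+|v|)} |v-u|^{(\gamma+2)p'} \dd u = C(1+|v|)^{(\gamma+2)p'+d}$. In the outer region the triangle inequality gives $|u| \geq |v-u|/2$, so $(1+|u|)^{-\kappa p'/p} \lesssim |v-u|^{-\kappa p'/p}$; integrating the combined power $|v-u|^{(\gamma+2)p' - \kappa p'/p}$ outside that ball produces $C(1+|v|)^{(\gamma+2)p' - \kappa p'/p + d}$, which is dominated by the inner contribution because its exponent is negative while $(\gamma+2)p'+d>0$. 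Taking the $1/p'$ power and using $((\gamma+2)p'+d)/p' = \gamma+2+d-d/p$ yields the advertised bound on $|\bar a_{ij}|$.

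The whole argument is essentially bookkeeping; the only real point of care is checking that the three algebraic hypotheses on $\gamma$, $p$ and $\kappa$ line up exactly with the three convergence conditions (one for $\bar c$, and two for $J$ at $u=v$ and at infinity). No step in the proof appears to be a genuine obstacle.
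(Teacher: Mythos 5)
Your proof of the bound on $\bar c$ is exactly the paper's: the paper dispatches it in one line by saying ``proved in the same way as Lemma~\ref{l:c-coeff} but using the $L^p$ norm of $f$ instead of its $L^1$ norm,'' and your splitting-plus-optimization argument is precisely what that means; the verification that $p<d/(d+\gamma)$ is equivalent to $\gamma p'+d<0$ and the choice $R=(\|f\|_{L^p}/\|f\|_{L^\infty})^{p/d}$ are both correct.

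For the bound on $|\bar a_{ij}|$ your argument is correct but organized differently from the paper's. The paper first splits the convolution domain into the ball $B_R$ with $R=2(|v|+1)$ and the dyadic annuli $B_{2^{k+1}R}\setminus B_{2^kR}$, applies the unweighted H\"older inequality on each piece, and then recovers the decay from the weight by observing that on the $k$-th annulus $1+|v-w|\gtrsim 2^kR$, so that $\|f\|_{L^p(B_{2^{k+1}R}(v)\setminus B_{2^kR}(v))}\lesssim (2^kR)^{-\kappa/p}\|f\|_{L^p_\kappa}$; the hypothesis $\gamma+2+d-d/p-\kappa/p<0$ then guarantees the geometric sum converges. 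You instead push the weight through H\"older from the start, reducing the problem to a single explicit integral $J(v)$, and estimate $J(v)$ by one split at $|v-u|=2(1+|v|)$. The algebraic content is identical in both cases (the two integrability conditions for $J$ near $u=v$ and at infinity are the same inequalities that control, respectively, the $B_R$ piece and the dyadic tail), but your version replaces an infinite sum over annuli with a single kernel estimate, which is slightly cleaner bookkeeping. Both routes correctly use $\kappa>p(d+2+\gamma)-d>0$, and your observation that the inner contribution to $J(v)$ dominates because its exponent is positive while the outer one is negative (with base $1+|v|\geq 1$) is the right way to extract the stated $(1+|v|)^{2+\gamma+d-d/p}$ growth. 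No gap.
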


\begin{proof}
The upper bound for $\bar c$ is proved in the same way as Lemma \ref{l:c-coeff} but using the $L^p$ norm of $f$ instead of its $L^1$ norm.

Let us concentrate in the proof of the upper bound for $|\bar a_{ij}|$. 

Let $R = 2(|v|+1)$. We split the integral in \eqref{e:intro-aij} into subdomains as follows.
\begin{align*}
|\bar a_{ij}| & \leq a_{d,\gamma} \int_{\R^d} |w|^{\gamma+2} f(v-w) \dd w, \\
&\leq a_{d,\gamma} \left( \int_{B_R} |w|^{\gamma+2} f(v-w) \dd w + \sum_{k=0}^\infty \int_{B_{2^{k+1} R} \setminus B_{2^k R}} |w|^{\gamma+2} f(v-w) \dd w \right),\\
\intertext{Applying the H\"older's inequality in each integral}
&\leq C \left( \|f\|_{L^p(B_R(v))} R^{2+\gamma+d-d/p} + \sum_{k=0}^\infty 
\|f\|_{L^p(B_{2^{k+1} R}(v) \setminus B_{2^k R}(v))} (2^k R)^{\gamma+2+d-d/p} 
\right), \\
&\leq C R^{2+\gamma+d-d/p} \left( \|f\|_{L^p}  + R^{-\kappa/p} \sum_{k=0}^\infty 
\|f\|_{L^p_\kappa} (2^k)^{\gamma+2+d-d/p-\kappa/p} 
\right), \\
&\leq C R^{2+\gamma+d-d/p} \|f\|_{L^p_\kappa} \qquad \qquad \text{provided that } \gamma+2+d-d/p - \kappa/p < 0.
\end{align*}

\end{proof}

\subsection{Upper bound for the Landau equation with $\gamma \in [-2,0]$}

We state the following result as a theorem to stress its importance. It is actually a corollary of Theorems \ref{t:local-max-pple-nonlinear} and \ref{t:local-max-pple-nonlinear-log}.

\begin{thm} \label{t:landau}
Let $f$ be a solution to the Landau equation \eqref{e:intro-landau} with $\gamma \in [-2,0]$ and initial data $f(0,v) = f_0(v)$. Assume that
\begin{align*}
M_1 \leq \int_{\R^d} f_0(v) \dd v &\leq M_0, \\
\int_{\R^d} |v|^2 f_0(v) \dd v &\leq E_0, \\
\int_{\R^d} f_0(v) \log f_0 \dd v &\leq H_0.
\end{align*}
Then, we have the following a priori upper bound on $f$,
\[ f(t,v) \leq \begin{cases}
K t^{-d/2} & \text{ for $t \leq T$},\\
K T^{-d/2} & \text{ for $t \leq T$}.
\end{cases},\]
where $K$ and $T$ depend on $d$, $M_1$, $M_0$, $E_0$ and $H_0$ only.
\end{thm}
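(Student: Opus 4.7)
The plan is to cast the Landau equation as an inequality of the form covered by Theorem \ref{t:local-max-pple-nonlinear} (or its logarithmic variant Theorem \ref{t:local-max-pple-nonlinear-log} in the borderline case) and to verify that all the ellipticity and integrability hypotheses are met with parameters depending only on $d$ and the physical quantities $M_1, M_0, E_0, H_0$. Writing the equation as $f_t = \bar a_{ij}\partial_{ij} f + \bar c\, f$ and using the trivial bound $\bar c f \leq \bar c\, \|f(t,\cdot)\|_{L^\infty}$, the zeroth-order term becomes a power of $\|f(t,\cdot)\|_{L^\infty}$ once $\bar c$ is estimated in terms of $\|f\|_{L^\infty}$.

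First I would fix the exponents of the generic theorem as $p = 1$, $\beta = (\gamma+2)/2 \in [0,1]$, and $\kappa = 2 - d(\gamma+2)/2$. With these choices, $\min(2\beta, 2) = \gamma+2$ and $\beta d - \kappa = (d-1)(\gamma+2) + \gamma$, so the upper and lower bounds on $\bar a_{ij}$ from Lemma \ref{l:coeffs-moderately-soft-potentials} match the conditions \eqref{e:lmpn-Lambda} and \eqref{e:lmpn-delta} precisely, with constants $\delta, \Lambda$ depending only on $M_1, M_0, E_0, H_0$. Since $\kappa \leq 2$ (because $\gamma \geq -2$), the weighted $L^1$ bound \eqref{e:lmpn-N} is controlled by $M_0 + E_0$, and since $\kappa + \beta d = 2 \geq 0$ the compatibility condition $\beta \geq -\kappa/d$ is automatic.

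For $\gamma \in (-2, 0]$, Lemma \ref{l:c-coeff} gives $\bar c \lesssim \|f(t,\cdot)\|_{L^\infty}^{-\gamma/d}$, so
\[ \bar c\, f \leq C\, \|f(t,\cdot)\|_{L^\infty}^{\,1 + \alpha}, \qquad \alpha := -\gamma/d \in [0, 2/d). \]
Since $\alpha < 2p/d = 2/d$, Theorem \ref{t:local-max-pple-nonlinear} applies and yields $f(t,v) < K t^{-d/2}$ on $[0,T] \times \R^d$; Corollary \ref{c:local-max-pple} then extends this to $t > T$ and completes the proof in this range.

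For the borderline case $\gamma = -2$ we have $\alpha = 2/d$, which is excluded from Theorem \ref{t:local-max-pple-nonlinear}. Here I would invoke the sharper Lemma \ref{l:c-coeff-log} to obtain $\bar c \lesssim \|f\|_{L^\infty}^{2/d}\, \log(1 + \|f\|_{L^\infty})^{-(d-2)/4}$, which fits into the hypothesis of Theorem \ref{t:local-max-pple-nonlinear-log} with $\eps = (d-2)/4 > 0$ (in particular for the physical dimension $d = 3$). The main subtlety of the whole argument lies precisely in this borderline case: the entropy assumption enters not through the moment bound but through the logarithmic improvement of $\bar c$ in Lemma \ref{l:c-coeff-log}, and it is exactly this gain that brings $\gamma = -2$ within reach of the method. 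A second, minor technical point is that $p = 1$ sits on the boundary of the hypotheses of Theorem \ref{t:local-max-pple-nonlinear}, but the proof of that theorem still produces a convergent integrand in \eqref{e:lmpn-specifictocontradict} whenever $\alpha < 2/d$ strictly, so no real modification is needed.
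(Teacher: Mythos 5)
Your proof is correct and follows the same route as the paper: conservation laws give time-uniform bounds on mass, energy, entropy; Lemma \ref{l:coeffs-moderately-soft-potentials} supplies the ellipticity bounds on $\bar a_{ij}$; Lemma \ref{l:c-coeff} (resp.\ Lemma \ref{l:c-coeff-log}) turns $\bar c f$ into the nonlinearity $C\|f\|_{L^\infty}^{1+\alpha}$ with $\alpha = -\gamma/d$; and Theorem \ref{t:local-max-pple-nonlinear} (resp.\ Theorem \ref{t:local-max-pple-nonlinear-log}) together with Corollary \ref{c:local-max-pple} closes the argument. The only cosmetic difference is the parameterization: you take $\beta = (\gamma+2)/2$, $\kappa = 2 - d(\gamma+2)/2$, which makes both ellipticity hypotheses of the abstract theorem sharp, whereas the paper uses $\beta = 2+\gamma$, $\kappa = 2$; since $\beta d - \kappa = (d-1)(\gamma+2)+\gamma$ in either case, the two choices are interchangeable and both verify all the hypotheses including $\beta \ge -\kappa/d$. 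Your remark on $p=1$ is a fair observation: the statement of Theorem \ref{t:local-max-pple-nonlinear} reads $p \in (1,\infty)$ even though its proof never uses $p>1$ and Theorem \ref{t:local-max-pple-intro} and the Landau application both take $p=1$; flagging the convergence of the time integral in \eqref{e:lmpn-specifictocontradict} under $\alpha < 2/d$ is exactly the right reassurance.
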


\begin{proof}
Because of the conservation of mass and energy, and the monotonicity of entropy, the following inequalities hold for all $t \geq 0$,
\begin{align*}
M_1 \leq \int_{\R^d} f(t,v) \dd v &\leq M_0, \\
\int_{\R^d} |v|^2 f(t,v) \dd v &\leq E_0, \\
\int_{\R^d} f(t,v) \log f(t,v) \dd v &\leq H_0.
\end{align*}

Depending on whether $\gamma > -2$ or $\gamma=-2$ we apply Theorem \ref{t:local-max-pple-nonlinear} or Theorem \ref{t:local-max-pple-nonlinear-log} and Corollary \ref{c:local-max-pple}.

In either case we have, for all $(t,v)$,
\begin{align*}
\det (\bar a_{ij}(t,v)) &\geq \delta (1+|v|)^{(d-1)(\gamma+2)+\gamma}, \\
 |\bar a_{ij}(t,v)| &\leq \Lambda (1+|v|)^{\gamma+2},
\end{align*}
where $\delta$ and $\Lambda$ are given in Lemma \ref{l:coeffs-moderately-soft-potentials}.

From Lemma \ref{l:c-coeff}, we also have
\[ \bar c(t,v) \leq C \|f(t,\cdot)\|_{L^\infty}^{-\gamma/d}.\]

Since $f$ has finite second moments,
\[ \int_{\R^d} (1+|v|)^2 f(v) \dd v \leq 2M_0 + 2E_0 \]

If $\gamma \in (-2,0]$, we apply Theorem \ref{t:local-max-pple-nonlinear} with $\alpha = -\gamma/d$, $p=1$, $\beta=2+\gamma$ and $\kappa = 2$ to conclude.

If $\gamma = -2$ (assuming $d\geq 3$), we resort to Lemma \ref{l:c-coeff-log} to obtain
\[ \bar c(t,v) \leq C \|f(t,\cdot)\|_{L^\infty}^{-\gamma/d} \log (1+ \|f(t,\cdot)\|_{L^\infty})^{-(d-2)/4}.\]
Thus, we can apply Theorem \ref{t:local-max-pple-nonlinear-log} with $\kappa=2$ and $\beta = 0$ to conclude also this borderline case.
\end{proof}

\subsection{Conditional upper bound for the Landau equation with $\gamma < -2$.}

We now consider the case of very soft potentials. The following upper bound is conditional to the assumption that the quantity $\int (1+|v|)^\kappa f(t,v)^p \dd v$ bounded by $W_0$ for all $t \geq 0$ and some appropriately large values of $\kappa$ and $p$.

\begin{thm} \label{t:landau-conditional}
Let $f$ be a solution to the Landau equation \eqref{e:intro-landau} with $\gamma \in [-d,-2)$ and initial data $f(0,v) = f_0(v)$. Assume that
\begin{align*}
M_1 \leq \int_{\R^d} f_0(v) \dd v &\leq M_0, \\
\int_{\R^d} |v|^2 f_0(v) \dd v &\leq E_0, \\
\int_{\R^d} f_0(v) \log f_0 \dd v &\leq H_0, \\
\|f(t,\cdot)\|_{L^p_\kappa}^p = \int_{\R^d} (1+|v|)^\kappa f(t,v)^p \dd v &\leq W_0 && \text{ for all $t \geq 0$},
\end{align*}
for some $p \in (d/(d+2+\gamma), d/(d+\gamma))$ and 
\[\kappa > \max \left(p(2+\gamma+d) - d, 2 + \frac {d^2} 2 \left(1 - \frac 1 p \right) - d \left( 1+\frac \gamma 2 \right) \right).\]

Then, we have the following a priori upper bound on $f$,
\[ f(t,v) \leq \begin{cases}
K t^{-d/2} & \text{ for $t \leq T$},\\
K T^{-d/2} & \text{ for $t \leq T$}.
\end{cases},\]
where $K$ and $T$ depend on $d$, $M_1$, $M_0$, $E_0$, $H_0$ and $W_0$ only.
\end{thm}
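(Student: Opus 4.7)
The plan is to mirror the proof of Theorem \ref{t:landau} line by line, replacing the unconditional ellipticity/zeroth-order estimates (Lemmas \ref{l:coeffs-moderately-soft-potentials} and \ref{l:c-coeff}) by the conditional ones of Lemma \ref{l:aij-coef-Lp-very-soft}, and then calibrating the parameters so that Theorem \ref{t:local-max-pple-nonlinear} applies. As in the moderately soft case, conservation of mass and energy and monotonicity of entropy propagate the hypotheses on $M$, $E$, $H$ to every $t\geq 0$; the weighted $L^p$ bound $\|f(t,\cdot)\|_{L^p_\kappa}^p \leq W_0$ is assumed directly for all $t\geq 0$.

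The lower bound
\[ \det \bar a_{ij}(t,v) \geq \delta (1+|v|)^{(d-1)(\gamma+2)+\gamma} \]
is not sensitive to the sign of $\gamma+2$ and is exactly the content of Lemma \ref{l:coeffs-very-soft-potentials}. For the upper bounds I invoke Lemma \ref{l:aij-coef-Lp-very-soft}, whose hypothesis $\kappa > p(2+\gamma+d)-d$ is exactly the first entry in the $\max$ of the present statement. Combined with $\bar c(t,v) f(t,v) \leq \bar c(t,v)\|f(t,\cdot)\|_{L^\infty}$, this yields
\[ |\bar a_{ij}(t,v)| \leq \Lambda (1+|v|)^{2+\gamma+d-d/p}, \qquad \bar c(t,v)f(t,v) \leq C\,\|f(t,\cdot)\|_{L^\infty}^{\,2-p(d+\gamma)/d}, \]
with $\Lambda$ and $C$ depending on $W_0, M_0, d, p, \kappa, \gamma$.

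The Landau equation thus takes the form required by Theorem \ref{t:local-max-pple-nonlinear} with the choices
\[ \alpha := 1 - \frac{p(d+\gamma)}{d}, \qquad \beta := 1 + \frac{\gamma}{2} + \frac{d}{2}\left(1 - \frac{1}{p}\right). \]
The admissibility condition $\alpha \in [0, 2p/d)$ rewrites as $d/(d+\gamma+2) < p \leq d/(d+\gamma)$, which is our assumption on $p$; the strict inequality $p < d/(d+\gamma)$ also gives $2\beta < 2$, so $\min(2\beta,2)=2\beta = 2+\gamma+d-d/p$, matching the $|\bar a_{ij}|$ estimate. The weight exponent that pairs with the determinant bound is
\[ \kappa' := \beta d - (d-1)(\gamma+2) - \gamma = 2 + \frac{d^2}{2}\left(1 - \frac{1}{p}\right) - d\left(1 + \frac{\gamma}{2}\right), \]
which is exactly the second entry of the $\max$; since $\kappa \geq \kappa'$, the hypothesis $\|f(t,\cdot)\|_{L^p_\kappa}^p \leq W_0$ implies the weaker $L^p_{\kappa'}$ control required by the main theorem. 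Finally, the structural condition $\beta \geq -\kappa'/d$ reduces by direct substitution to $d(1-1/p) + 2/d \geq 0$, which is automatic for $p \geq 1$.

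With all hypotheses of Theorem \ref{t:local-max-pple-nonlinear} verified, applying it together with Corollary \ref{c:local-max-pple} produces the stated two-sided-in-time bound, and the constants $K$, $T$ depend only on $d, M_1, M_0, E_0, H_0, W_0$ (and $p, \gamma, \kappa$, which are fixed parameters). There is no serious analytic obstacle beyond the preliminary work already done in Lemma \ref{l:aij-coef-Lp-very-soft}; the main thing to watch is the bookkeeping, namely that the two constraints on $\kappa$ are not artifacts but correspond precisely to (i) the integrability required for the bound on $|\bar a_{ij}|$ and (ii) the matching between the determinant growth and the weight $\kappa'$ dictated by $\beta$.
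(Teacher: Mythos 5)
Your proposal is correct and follows essentially the same route as the paper: the same lemmas (\ref{l:coeffs-very-soft-potentials} and \ref{l:aij-coef-Lp-very-soft}), the same choices $\alpha = 1 - p(d+\gamma)/d$ and $\beta = (2+\gamma+d-d/p)/2$, and the same verification that the two constraints on $\kappa$ come from the $|\bar a_{ij}|$ bound and the determinant matching. The only cosmetic difference is that you invoke Theorem \ref{t:local-max-pple-nonlinear} with the tight weight $\kappa'$ (and check $\beta \geq -\kappa'/d$ directly), whereas the paper keeps $\kappa$ and simply observes that $\beta \in (0,1)$ and $\kappa \geq 0$; the two bookkeeping choices are equivalent.
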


\begin{proof}
As before, we have the lower bound for $\bar a_{ij}$ given in Lemma \ref{l:coeffs-very-soft-potentials}.
\[ \det (\bar a_{ij}(t,v)) \geq \delta (1+|v|)^{(d-1)(\gamma+2)+\gamma}.\]

The upper bounds for both $\bar a_{ij}$ and $\bar c$ are obtained from Lemma \ref{l:aij-coef-Lp-very-soft}.
\begin{align*}
|\bar a_{ij}| &\leq C (1+|v|)^{2+\gamma+d-d/p} \|f\|_{L^p_\kappa} && \text{ since $p>d/(d+\gamma+2)$ and $\kappa \geq p(2+\gamma+d)-d$}, \\
\bar c(t,v) &\lesssim \|f\|_{L^p}^{p(d+\gamma)/d} m(t)^{1 - \frac{p(d+\gamma)} d}  && \text{ since } p < d / (d+\gamma)
\end{align*}

We prove the result applying Theorem \ref{t:local-max-pple-nonlinear} with $\beta = (2+\gamma+d-d/p)/2$ and $\alpha = 1-p(d+\gamma)/d$, and Corollary \ref{c:local-max-pple} as in the proof of Theorem \ref{t:landau}.

The restriction $\kappa \geq 2 + \frac {d^2} 2 \left(1 - \frac 1 p \right) - d \left( 1+\frac \gamma 2 \right) = \beta d - (d-1)(\gamma+2) - \gamma$ ensures that
\[ \det \bar a_{ij} \geq c (1+|v|)^{d \beta - \kappa}.\]

Note that because of the range of values of $p$, we have $\beta \in (0,1)$. Thus, we always have $\beta \in [-\kappa/d,1]$ because $\kappa\geq 0$.

\end{proof}

\begin{remark}
It is not clear if the value of $\kappa$ used in Theorem \ref{t:landau-conditional} is optimal. The requirement $p > d / (d+2+\gamma)$ seems to be optimal for the approach of this proof to work.
\end{remark}

\begin{remark}
If we have a bound for $\|f\|_{L^p_\kappa}$ for some $p > d/(d+\gamma)$, then this implies another bound for a smaller $p$ as well by interpolation between $\|f\|_{L^p_\kappa}$ and $\|f\|_{L^1_2}$. Thus, the restriction $p < d/(d+\gamma)$ is not limiting. In any case, a version of Theorem \ref{t:landau-conditional} can also be proved for $p > d/(d+\gamma)$. In this case, we would use the following upper bound for $\bar c$,
\[ \bar c \lesssim M_0^{-\frac{p\gamma}{(p-1)d}}  \|f\|_{L^p}^{1+\frac{p\gamma}{(p-1)d}}.\]
This bound is independent of $\|f\|_{L^\infty}$ and therefore we would apply Theorem \ref{t:local-max-pple-nonlinear} with $\alpha=0$, and a suitable value of $\kappa$.
\end{remark}

\bibliographystyle{plain}
\bibliography{landau8}
\end{document}